\newtheorem{theorem}{Theorem}[section]
\newtheorem{lemma}{Lemma}[section]
\newtheorem{proposition}[theorem]{Proposition}
\theoremstyle{definition}
\theoremstyle{remark}
\numberwithin{equation}{section}
\newcommand{\E}{\mathsf{E}}
\newcommand{\Q}{\mathsf{Q}}
\DeclareMathOperator*{\sign}{\mathrm{sign}}
\DeclareMathOperator*{\trace}{\mathrm{trace}}
\newcommand{\lef}{\langle\hskip -1.8pt \langle}
\newcommand{\rig}{\rangle\hskip -1.8pt \rangle}
\theoremstyle{definition}
\newtheorem{remark}{Remark}
\newtheorem{example}{Example}[section]
\definecolor{gray}{rgb}{0.9,0.9,0.9}
\begin{document}

\title[] {When A Stochastic Exponential is a true Martingale. Extension of a method of Bene\^s.}

\author{F. Klebaner}
\address{School of Mathematical Sciences\\  Monash
University, Building 28\\ Clayton Campus, Wellington road, Victoria
3800\\ Australia} \email{fima.klebaner@sci.monash.edu.au}
 \thanks{\bf Acknowledgement.
 Research   supported by the Australian Research
Council Grants DP0881011 and DP0988483.}

\author{R. Liptser}
\address{Department of Electrical Engineering Systems,
Tel Aviv University, 69978 Tel Aviv, Israel}
\email{liptser@eng.tau.ac.il; rliptser@gmail.com}

\keywords{Exponential martingale, diffusion process with jumps,
Girsanov theorem, Bene\^s method}
\subjclass{60G45, 60G46 }


\maketitle
\begin{abstract}
Let $\mathfrak{z}$ be a stochastic exponential, i.e., $\mathfrak{z}_t=1+\int_0^t\mathfrak{z}_{s-}dM_s$, of a local martingale $M$ with jumps   $\triangle M_t>-1$.  Then  $\mathfrak{z}$ is a nonnegative local martingale with $\E\mathfrak{z}_t\le 1$. If $\E\mathfrak{z}_T= 1$, then  $\mathfrak{z}$ is a martingale on the time interval $[0,T]$. Martingale property plays an important role in many
applications. It is therefore of interest to give natural and easy verifiable conditions for the martingale property.
In this paper, the property $\E\mathfrak{z}_{_T}=1$  is verified with the so-called  linear growth
 conditions  involved in the definition of parameters of $M$,   proposed by Girsanov
\cite{Girs}. These conditions generalize  the Bene\^s idea, \cite{Benes}, and avoid the technology of
piece-wise approximation.
These conditions are applicable even if Novikov, \cite{Novikov}, and Kazamaki, \cite{Kaz}, conditions
fail.  They are effective  for Markov processes that explode, Markov processes with jumps  and also non Markov processes.
Our approach is different  to   recently published papers
\cite{CFY} and \cite{MiUr}.
\end{abstract}

\section{\bf Introduction}
\label{sec-1}
Let  $M=(M_t)_{t\in[0,T]}$ be a martingale (local martingale)
with paths from Skorokhod's space $\mathbb{D}$. So
$M=M^c+M^d$, where  $M^c$ and $M^d$ are continuous and purely discontinuous martingales respectively.
Denote  by $\triangle M_t:=M_{t}-M_{t-}$ the jump process of the martingale $M$ and by
$\langle M^c\rangle_t$ the predictable quadratic variation of continuous martingale $M^c$.
If $\triangle M_t> -1, \ t>0$, then the It\^o equation

\begin{equation}\label{eq:uzj}
\mathfrak{z}_t=1+\int_0^t\mathfrak{z}_{s-}dM_s
\end{equation}
obeys the unique nonnegative solution  (eg.\cite{DoDade})
\begin{equation}\label{eq:DoLdu}
\mathfrak{z}_{t}=\exp\Big(M_t-\frac{1}{2}\langle
    M^c\rangle_t\Big)\prod_{s=0}^t(1+\triangle M_s)e^{-\triangle M_s},
\quad t\ge 0,
\end{equation}
known as Doleans-Dade exponential. It is well known that
$\mathfrak{z}$ is a nonnegative local martingale and, therefore,  it is a supermartingale with $\E \mathfrak{z}_t\le 1$ for any $t\ge 0$.  If
\begin{equation}\label{one}
\E\mathfrak{z}_{_T}=1, \ \exists \ T>0,
\end{equation}
then    $\mathfrak{z}=(\mathfrak{z}_t)_{t\in[0,T]}$ is a martingale, i.e.,
$ \E\mathfrak{z}_t\equiv 1$, $t\in[0,T]$. The property
\eqref{one} is used in different applications, where
 $\mathfrak{z}_{_T}$ plays a role of the Radon-Nikodym derivative of one probability measure w.r.t.
 another one supported on the Skorokhod space.
The random variable $\mathfrak{z}_{_T}$ is one of important objects involved in Statistic of  Random Processes (
Liptser-Shiryaev, \cite{LSI}),   in Financial Mathematics
(Shiryaev, \cite{ShAN}, Sin, Carlos A. \cite{Sin}, etc.), in the proof of existence of weak solutions of It\^o's  equations   (Rydberg \cite{Rud}) and many others important applications.

Below we give a short survey  of known conditions implying  $\E\mathfrak{z}_{_T}=1$ provided that
$M\equiv M^c$, in which case
$
\E\exp\big(M^c_T-\frac{1}{2}\langle M^c\rangle_T\big)=1.
$

Girsanov in his classical paper \cite{Girs}  used the condition
$\langle M^c\rangle_T\le \text{const}$. This condition was weakened in many variants and accomplished
by Novikov condition,\cite{Novikov},
\begin{equation*}
\E e^{\frac{1}{2}\langle M^c\rangle_T}<\infty,
\end{equation*}
by Kazamaki condition  \cite{Kaz}
\begin{equation*}
\sup\limits_{t\in[0,T]}\E e^{\frac{1}{2}M^c_t}<\infty
\end{equation*}
and, finally, by Krylov conditionÊ\cite{Kry}:
$$
    \lim\limits_{\varepsilon\downarrow 0}\varepsilon\log\E\exp\Big([1-\varepsilon]\frac{1}{2}
\langle M^c\rangle_T\Big)<\infty, \
\lim\limits_{\varepsilon\downarrow
0}\varepsilon\log\sup\limits_{t\in[0,T]}
\E\exp\Big([1-\varepsilon]\frac{1}{2}M^c_t\Big)<\infty.
$$
It would be noted that for any $\varepsilon\in (0,1/2)$ there exists a martingale $M^c$ such that

\noindent
$
\E e^{(\frac{1}{2}-\varepsilon)\langle M^c\rangle_T}<\infty,
$
while  \eqref{one} fails (see. \cite{LSI}, p. 224).

\medskip
Consider a simple example. Let  $B_t$ be Brownian motion and let
 $M^c_t =
2\int_0^t  B_s dB_s$.
\newline
Then Kazamaki's condition fails for $T\ge 1$, because
  $\E\exp\big(\frac{1}{2}B^2_{1}-\frac{1}{2}\big)=\infty$, that is,
\begin{gather*}
\E\exp\bigg(\int_0^TB_sdB_s\bigg)=\E\exp\bigg(\frac{1}{2}B^2_T-\frac{1}{2}T\bigg)=\infty.
\end{gather*}
Since Novikov's condition   implies Kazamaki's condition
  Novikov's condition fails too.

All aforementioned conditions, guaranteing  \eqref{one}, are formulated in terms of   $M^c$ and $\langle M^c\rangle$. Verifications of these conditions require often complicated
and even non-achievable   calculations.
A natural question  is  how to check \eqref{one} in the setting of the aforementioned example for $T\ge 1$.
It is surprising that it is possible to do with the help of Bene\^s's condition, \cite{Benes}.

\medskip
The main aim of this paper consists in showing that conditions referred to later as
``Bene\^s conditions''  provide
\eqref{one}  for martingales $M$ of sufficiently general structure
\eqref{eq:DoLdu}.

Together with Bene\^s' condition we shall use a uniform integrability condition, a test which was proposed by Hitsuda
in \cite{Hits}).

Note also that the  proposed approach of verification of $\E \mathfrak{z}_{_T}=1$    fits naturally with the method of establishing   $\E \mathfrak{z}_\infty=1$  in \cite{KLSh}.

Approaches used in Markov setting that do  not use Bene\^s' conditions can be found in recent paper of
Cheridito, Filipovi\'c, Yor, \cite{CFY}.

\medskip
In order to formulate our result assume $M$ in
\eqref{eq:DoLdu} is a part of some semimartingale
$X$ involved in typical models met in applications:

\begin{itemize}
\item[1)]   {\em a non-explosive Markov process}

\item[2)] {\em a non Markovian  semimartingale  }

\item[3)] {\em a possibly explosive Markov process}.
\end{itemize}
The general case is analyzed in Section \ref{Sec-3}. In Section \ref{sec-2}, we consider   the case of continuous martingales, which is the simplest   from technical point of few.
In Section \ref{sec-2} we show how to replace the classical Bene\^s proof by a new one serving most
  models.

Applicability  of the main result is shown on many examples.
An auxiliary technical result (generalized Girsanov theorem) is given in Appendix \ref{sec-A}.

\begin{remark}
The question whether  $\mathfrak{z}$  a martingale   arises    often  in view of the following problem. Let
$\mu^X$ and $\mu^Y$ be probability measures supported on Skorokhod space. These measures are distributions
of semimartingales  $(X,Y)=(X_t,Y_t)_{t\in [0,T]}$ respectively. Then the question becomes under which conditions
$\mu^X\ll \mu^Y$ with the Radon-Nikodym derivative $\frac{d\mu^Y}{d\mu^X}(X)=\mathfrak{z}_{_T}$?
These result can be found in {\rm \cite{Girs} (Girsanov)}, {\rm \cite{Daw} (Dawson)}, {\rm \cite{ItoWat}
(It\^o - Watanabe)}, {\rm \cite{KSH} (Kadota -
Shepp)}, {\rm \cite{K} (Kunita)},  {\rm \cite{LM} (L\'epingle - M\'emin)}, {\rm \cite[Ch. 7]{LSI} (Liptser - Shiryaev)}, {\rm \cite{KLSh} (Kabanov - Liptser - Shiryaev)}
and more new papers {\rm \cite{Rud} (Rydberg )}, {\rm \cite{UPR} (Palmowski - Rolski)}, {\rm \cite{WH+} (Wong - Heyde )}, {\rm \cite{CFY} (Cheridito - Filipovi\'c - Yor)}, {\rm \cite{MiUr}
 (Mijitovic - Urusov)}, {\rm \cite{BauNua} (Baudoin - Nualart)}, etc.
\end{remark}

\section{\bf  Bene\^s conditions. New   proofs}\label{sec-2}

In this Section we show   the example  where conditions of Kazamaki and Novikov fail,
 but Bene\^s' conditions don't.

We consider two types of continuous martingales:
$$
M'_t=\int_0^t\sigma(B_s)dB_s\quad\text{and}\quad M''_t=\int_0^t\sigma_s(B)dB_s,
$$
where functions $\sigma(y)$ and $\sigma_s(y)$ of arguments $y\in\mathbb{R}$ and
$(s, y_{[0,s]})\in\mathbb{R}_+\times\mathbb{C}_{[0,\infty)}$ are measurable w.r.t.
corresponding $\sigma$-algebras and satisfy the linear grows conditions:
\begin{gather}\label{BC}
\sigma^2(y)\le \texttt{r}[1+y^2],
\\
\sigma^2(s,y_{[0,s]})\le \texttt{r}\bigg[1+\sup_{s'\le s}y^2_{s'}\bigg].
\label{BCC}
\end{gather}

It is well known from the classical Bene\^s paper \cite{Benes} (see also
Karatzas, Shreve, \cite{142}), that
\eqref{one} holds   with any $T>0$. We show that this result can be easily obtained avoiding
piece-wise technique approximation, used by Bene\^s.

  Theorems \ref{theo-2.0.1} and \ref{theo-2.0.2} were formulated by Bene\^s, but the proofs below    are  new.
\begin{theorem}\label{theo-2.0.1}
Let $M_t=M'_t$ and \eqref{BC} hold true.

Then $\eqref{one}$ is valid for any $T>0$.
\end{theorem}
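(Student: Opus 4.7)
The plan is to replace Bene\^s's piecewise approximation with a single localization followed by a Girsanov change of measure and a Gronwall estimate carried out under the new measure. First introduce the stopping times $\tau_n=\inf\{t\ge 0:|B_t|\ge n\}$. On $[0,\tau_n]$ the integrand $\sigma(B)$ is bounded, so (by, say, Novikov applied to the stopped process) $\mathfrak{z}^{\tau_n}_t:=\mathfrak{z}_{t\wedge\tau_n}$ is a genuine $\P$-martingale, whence $\E\mathfrak{z}_{T\wedge\tau_n}=1$ for every $n$. The identity
$$
1=\E\bigl[\mathfrak{z}_T\mathbf{1}_{\{\tau_n>T\}}\bigr]+\E\bigl[\mathfrak{z}_{\tau_n}\mathbf{1}_{\{\tau_n\le T\}}\bigr]
$$
then reduces the theorem, via monotone convergence on the first term, to showing that the second term tends to $0$ as $n\to\infty$.

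Next I would use $\mathfrak{z}^{\tau_n}$ as a Radon--Nikodym density: define $\widetilde{\P}_n$ on $\mathcal{F}_T$ by $d\widetilde{\P}_n/d\P=\mathfrak{z}_{T\wedge\tau_n}$. By Girsanov's theorem,
$$
\widetilde{B}_t:=B_t-\int_0^{t\wedge\tau_n}\sigma(B_s)\,ds
$$
is a standard Brownian motion under $\widetilde{\P}_n$. The second term above equals $\widetilde{\P}_n(\tau_n\le T)$, so the target becomes $\widetilde{\P}_n(\tau_n\le T)\to 0$. (This is the Hitsuda-style uniform integrability test alluded to in the introduction: UI of $\{\mathfrak{z}_{T\wedge\tau_n}\}$ under $\P$ is equivalent to the tails of $\tau_n$ vanishing under $\widetilde{\P}_n$.)

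To control these tails I would derive a moment bound on $B$ that is uniform in the localization index. Under $\widetilde{\P}_n$, for $t\le T$,
$$
B_{t\wedge\tau_n}=\widetilde{B}_{t\wedge\tau_n}+\int_0^{t\wedge\tau_n}\sigma(B_s)\,ds.
$$
Squaring, taking $\sup_{s\le t}$, applying the Burkholder--Davis--Gundy inequality to $\widetilde{B}$, Cauchy--Schwarz to the drift, and the linear growth assumption \eqref{BC}, I would obtain a self-referential inequality of the form
$$
\E^{\widetilde{\P}_n}\sup_{s\le t}B^2_{s\wedge\tau_n}\le C_1+C_2\int_0^t\E^{\widetilde{\P}_n}\sup_{u\le s}B^2_{u\wedge\tau_n}\,ds,
$$
with $C_1,C_2$ depending only on $T$ and $\texttt{r}$. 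Gronwall's lemma then gives $\E^{\widetilde{\P}_n}\sup_{t\le T}B^2_{t\wedge\tau_n}\le C_T$ uniformly in $n$, and Chebyshev yields $\widetilde{\P}_n(\tau_n\le T)=\widetilde{\P}_n\bigl(\sup_{t\le T}|B_{t\wedge\tau_n}|\ge n\bigr)\le C_T/n^2\to 0$.

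The principal obstacle is precisely the Gronwall step: the drift is present only up to the random time $\tau_n$, and the estimate must be free of $n$. The feature that makes the scheme close is exactly the Bene\^s-type linear growth $\sigma^2(y)\le\texttt{r}[1+y^2]$, which matches the quadratic self-reference produced by squaring and squares the square on both sides at the same scale; a superlinear growth would break the iteration. Once the Gronwall bound is in hand, the conclusion $\E\mathfrak{z}_T=1$ is immediate from the first paragraph.
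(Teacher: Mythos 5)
Your proof is correct, and it reaches the conclusion by a route that differs from the paper's in how the limit $n\to\infty$ is taken, though the analytic core is the same. The paper proves uniform integrability of $\{\mathfrak{z}_{_{T\wedge\tau_n}}\}$ via the Vall\'ee de Poussin test with $x\log x$ (Hitsuda's device): it bounds $\E\mathfrak{z}_{_{T\wedge\tau_n}}\log\mathfrak{z}_{_{T\wedge\tau_n}}$ by $\widetilde{\E}^n\int_0^T I_{\{s\le\tau_n\}}\sigma^2(B_s)\,ds$, reduces this via \eqref{BC} to the time-integrated second moment $\widetilde{\E}^n\int_0^T B^2_{s\wedge\tau_n}\,ds$, and closes with Gronwall; UI then upgrades $\E\mathfrak{z}_{_{T\wedge\tau_n}}=1$ to $\E\mathfrak{z}_{_T}=1$. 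You instead bypass UI entirely through the exact decomposition $1=\E[\mathfrak{z}_{_T}\mathbf{1}_{\{\tau_n>T\}}]+\widetilde{\P}_n(\tau_n\le T)$, so that the whole theorem reduces to showing the exit probability under the new measure vanishes --- this is the ``non-explosion under $\Q^n$'' criterion in the spirit of \cite{CFY} and \cite{MiUr}, from which the paper explicitly distinguishes itself. Both arguments then run the identical engine: Girsanov under $\Q^n$, the Bene\^s linear growth \eqref{BC}, and Gronwall on a second moment of $B$ under $\Q^n$. The trade-offs: your route needs the slightly stronger bound $\widetilde{\E}^n\sup_{t\le T}B^2_{t\wedge\tau_n}\le C_T$ (hence Doob's maximal inequality) so that Chebyshev gives the rate $C_T/n^2$, whereas the paper only needs $\widetilde{\E}^n\int_0^T B^2_{s\wedge\tau_n}\,ds\le C_T$; on the other hand the paper's $x\log x$ computation is the one that transfers cleanly to the jump setting of Lemma \ref{lem-unint}, where $\log\mathfrak{z}\le M$ still holds because $\varphi-\log(1+\varphi)\ge0$. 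Two small points, neither a gap: your parenthetical claim that vanishing of $\widetilde{\P}_n(\tau_n\le T)$ is ``equivalent'' to UI is looser than what you actually use (the decomposition alone suffices), and the Gronwall step requires knowing a priori that $\widetilde{\E}^n\sup_{t\le T}B^2_{t\wedge\tau_n}<\infty$, which holds here because $|B_{t\wedge\tau_n}|\le n$ by continuity.
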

\begin{proof} Choose
$
\tau_n=\inf\big\{t:(\mathfrak{z}_t\vee B^2_t)\ge n\big\}\footnote{ $\inf\{\varnothing\}=\infty$},
\ n\ge 1.
$
Write
$$
\mathfrak{z}_{t\wedge\tau_n}=1+\int_0^tI_{\{s\le\tau_n\}}\mathfrak{z}_s\sigma(B_s)dB_s.
$$
The definition of $\tau_n$ and condition  \eqref{BC} imply  that the integrand
in the  It\^o's integral $\int_0^tI_{\{s\le\tau_n\}}\mathfrak{z}_s\sigma(B_s)dB_s$ is bounded. Therefore
the process  $\mathfrak{z}_{t\wedge\tau_n}$  is a square integrable martingale, that is,
$\E\mathfrak{z}_{_{T\wedge\tau_n}}=1$.
If a family $(\mathfrak{z}_{_{T\wedge\tau_n}})_{n\ge 1}$ is uniformly integrable, then
$\E\mathfrak{z}_{_{T\wedge\tau_n}}\xrightarrow[n\to\infty]{}1$ and
$
\lim_{n\to\infty}\E\mathfrak{z}_{_{T\wedge\tau_n}}=\E\mathfrak{z}_{_T}=1.
$
Thus, it is left to check   the  uniform integrability.
Following    Hitsuda we apply Vall\'ee de Poussin's theorem with function
$x\log x$, $x\ge 0$ and show that
$$
\sup_n\E \mathfrak{z}_{_{T\wedge\tau_n}}\log(\mathfrak{z}_{_{T\wedge\tau_n}})<\infty.
$$
Since $
\E\mathfrak{z}_{_{T\wedge\tau_n}}=1,
$
 change the probability measure
  $ \Q^n\ll \mathsf{P} $ with
$ d\Q^n=\mathfrak{z}_{_{T\wedge\tau_n}}d\mathsf{P} $
and obtain (here $\widetilde{\E}^n$ denotes an expectation relative to $\Q^n$)

$$
\sup_n\E \mathfrak{z}_{_{T\wedge\tau_n}}\log\big(\mathfrak{z}_{_{T\wedge\tau_n}}\big)=
\sup_n\widetilde{\E}^n\log\big(\mathfrak{z}_{_{T\wedge\tau_n}}\big).
$$
Next,
$
\log\mathfrak{z}_t=\int_0^t\sigma(B_s)dB_s-\frac{1}{2}\int_0^t\sigma^2(B_s)ds \le \int_0^t\sigma(B_s)dB_s
$
implies
$$
\sup_n\E\mathfrak{z}_{_{T\wedge\tau_n}}\log\big(\mathfrak{z}_{_{T\wedge\tau_n}}\big)\le
\sup_n\widetilde{\E}^n\int_0^{T}I_{\{s\le \tau_n\}}\sigma(B_s)dB_s
$$
which gives us a hint to use a representation of
$B_{t\wedge\tau_n}$ as a $\Q^n$ - semimartingale in the formula
$\widetilde{\E}^n\int_0^{T}I_{\{s\le \tau_n\}}\sigma(B_s)dB_s$.
By the classical Girsanov theorem
$$
B_{t\wedge\tau_n}=\int_0^tI_{\{s\le \tau_n\}}\sigma(B_s)ds+\widetilde{B}^n_t
$$
with  $\Q^n$ - Brownian motion $\widetilde{B}^n_t$ stopped at time $\tau_n$ and having the
predictable quadratic variation $\langle \widetilde{B}\rangle_t=t\wedge\tau_n$.
Hence
$$
\sup_n\E\mathfrak{z}_{_{T\wedge\tau_n}}\log\big(\mathfrak{z}_{_{T\wedge\tau_n}}\big)\le
\sup_n\widetilde{\E}^n\int_0^{T}I_{\{s\le \tau_n\}}\sigma^2(B_s)ds
$$
and the proof is reduced to verification of
$
\widetilde{\E}^n\int_0^{T}\sigma^2(B_{s\wedge\tau_n})ds\le  \texttt{r}
$
with a constant \texttt{r} independent of $n$. Bene\^s condition \eqref{BC}
is the key point of the required verification. It enables to replace
$\widetilde{\E}^n\int_0^{T}\sigma^2(B_{s\wedge\tau_n})ds$ by $\widetilde{\E}^n\int_0^{T}(B^2_{s\wedge\tau_n})ds$ and verify only the validity of
\begin{equation*}
\tilde{\E}^n\int_0^T(B^2_{s\wedge\tau_n})ds\le\texttt{r}
\end{equation*}
with \texttt{r} independent of $n$.
To this end, by applying the It\^o formula to
$B^2_{t\wedge\tau_n}$, we obtain
$$
\begin{array}{ll}
B^2_{t\wedge\tau_n}=2\int_0^tI_{\{s\le \tau_n\}}
B_{s}\sigma(B_s)ds + 2\int_0^tI_{\{s\le \tau_n\}}B_sd\widetilde{B}^n_s
+
\langle \widetilde{B}^n\rangle_t
\\ \\
\widetilde{\E}^nB^2_{t\wedge\tau_n}=2\int_0^t\widetilde{\E}^nI_{\{s\le \tau_n\}}
B_{s}\sigma(B_s)ds +\widetilde{\E}^n\langle \widetilde{B}^n\rangle_t.
\end{array}
$$
Since $|B_{s}\sigma(B_s)|ds\le c\big[1+B^2_s|$, the following upper bound holds for
$V^n_t=\widetilde{\E}^nB^2_{t\wedge\tau_n}$:
$$
\widetilde{\E}^nB^2_{t\wedge\tau_n}\le 2\int_0^t\widetilde{\E}^nI_{\{s\le \tau_n\}}
|B_{s}\sigma(B_s)|ds+\widetilde{\E}^n(t\wedge\tau_n).
$$
Therefore $V^n_t$ satisfies the Gronwall-Bellman inequality
$
V^n_t\le \texttt{r}\big[1+\int_0^tV^n_sds\big]
$
with appropriated positive constant \texttt{r}.

Hence $\int_0^tV^n_sds\le e^{\texttt{r}t}-1$, that is,
$
\sup\limits_n\widetilde{\E}^n\int_0^TB^2_{s\wedge\tau_n}ds\le e^{\texttt{r}T}-1,  \ \forall \ T>0.
$
\end{proof}

\
\bigskip

\begin{theorem}\label{theo-2.0.2}
 Let $M_t=M''_t$ and condition \eqref{BCC} holds.

Then $\E\mathfrak{z}_{_T}=1$ for any $T>0$.
\end{theorem}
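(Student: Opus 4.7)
The plan is to mimic the proof of Theorem \ref{theo-2.0.1} almost verbatim, adjusting only the points where the path-dependence of $\sigma_s$ requires new care. First I would strengthen the localizing sequence to
$$
\tau_n=\inf\Big\{t:\big(\mathfrak{z}_t\vee\sup\nolimits_{s\le t}B^2_s\big)\ge n\Big\},
$$
since \eqref{BCC} bounds $\sigma^2_s$ by the running supremum of $B^2$, not by $B^2_s$ alone. With this choice the integrand $I_{\{s\le\tau_n\}}\mathfrak{z}_s\sigma_s(B)$ in the stochastic integral is bounded, so $\mathfrak{z}_{t\wedge\tau_n}$ is a square-integrable martingale; in particular $\E\mathfrak{z}_{T\wedge\tau_n}=1$, and we may set $d\Q^n=\mathfrak{z}_{T\wedge\tau_n}d\mathsf{P}$.

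Next, exactly as in Theorem \ref{theo-2.0.1}, uniform integrability of $(\mathfrak{z}_{T\wedge\tau_n})_{n\ge 1}$ will follow from Vall\'ee de Poussin applied to $x\log x$, provided $\sup_n\widetilde{\E}^n\log\mathfrak{z}_{T\wedge\tau_n}<\infty$. Using $\log\mathfrak{z}_t\le\int_0^t\sigma_s(B)dB_s$ together with the Girsanov representation $B_{t\wedge\tau_n}=\int_0^tI_{\{s\le\tau_n\}}\sigma_s(B)ds+\widetilde{B}^n_t$ under $\Q^n$ (where $\widetilde{B}^n$ is a $\Q^n$-Brownian motion stopped at $\tau_n$), this reduces to proving
$$
\sup_n\widetilde{\E}^n\int_0^T I_{\{s\le\tau_n\}}\sigma^2_s(B)\,ds<\infty,
$$
and by \eqref{BCC} it therefore suffices to show that $W^n_t:=\widetilde{\E}^n\sup_{u\le t}B^2_{u\wedge\tau_n}$ is bounded uniformly in $n$ on $[0,T]$.

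The main technical obstacle, and the point where the present proof genuinely deviates from that of Theorem \ref{theo-2.0.1}, is that one must now control the running supremum rather than $\widetilde{\E}^n B^2_{t\wedge\tau_n}$, since \eqref{BCC} involves $\sup_{s'\le s}y^2_{s'}$. The direct It\^o expansion of $B^2_{t\wedge\tau_n}$ that sufficed before no longer closes under $\sup$. To circumvent this, I would not expand $B^2_{t\wedge\tau_n}$ by It\^o, but rather decompose $B_{t\wedge\tau_n}$ itself into its $\Q^n$-drift and $\Q^n$-martingale parts and apply two classical inequalities: Cauchy--Schwarz to the drift, yielding
$$
\sup_{u\le t}\Big(\int_0^uI_{\{s\le\tau_n\}}\sigma_s(B)\,ds\Big)^2\le t\int_0^tI_{\{s\le\tau_n\}}\sigma^2_s(B)\,ds,
$$
and Doob's $L^2$ maximal inequality to $\widetilde{B}^n$, yielding $\widetilde{\E}^n\sup_{u\le t}(\widetilde{B}^n_u)^2\le 4t$.

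Combining $(a+b)^2\le 2a^2+2b^2$ with these bounds and with \eqref{BCC} (noting that on $\{s\le\tau_n\}$ one has $\sup_{s'\le s}B^2_{s'}=\sup_{s'\le s}B^2_{s'\wedge\tau_n}$) gives
$$
W^n_t\le 2t\,\texttt{r}\int_0^t\bigl(1+W^n_s\bigr)ds+8t\le C_T\Big[1+\int_0^tW^n_s\,ds\Big],
$$
with $C_T$ depending only on $T$ and $\texttt{r}$. Gronwall--Bellman then furnishes a bound on $W^n_T$ independent of $n$, and the remainder of the argument closes exactly as in Theorem \ref{theo-2.0.1}.
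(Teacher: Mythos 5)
Your proposal is correct and follows essentially the same route as the paper's own proof: the same localizing sequence $\tau_n$ built from $\sup_{s\le t}B^2_s$, the same reduction via Vall\'ee de Poussin and $\sup_n\widetilde{\E}^n\log\mathfrak{z}_{T\wedge\tau_n}$, the same Girsanov decomposition of $B_{t\wedge\tau_n}$ under $\Q^n$, and the same Cauchy--Schwarz plus Doob maximal inequality treatment of the running supremum feeding a Gronwall--Bellman bound. The one point you single out as a genuine deviation --- bounding the supremum by decomposing $B$ itself rather than applying It\^o to $B^2$ --- is in fact exactly what the paper does.
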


\begin{proof} Choose
$
\tau_n=\inf\big\{t:(\mathfrak{z}_t\vee \sup_{s\le t}B^2_s)\ge n\big\}
$
and obtain  $\E\mathfrak{z}_{_{T\wedge\tau_n}}=1$.
Now, the proof of the theorem is reduced to verification of the inequality
$
\sup_n\widetilde{\E}^n\log\big(\mathfrak{z}_{_{T\wedge\tau_n}}\big)<\infty,
$
where
$
\widetilde{\E}^n
$
denotes the expectation   relative to the probability measure $\Q^n$, $\Q^n\ll\mathsf{P}$, $\frac{d\Q^n}{d\mathsf{P}}=\mathfrak{z}_{_{T\wedge\tau_n}}$.
By Girsanov's theorem the process  $B_{t\wedge\tau_n}$ is a semimartingale w.r.t. the measure
$\Q^n$:
\begin{equation}\label{eq:22/}
B_{t\wedge\tau_n}=\int_0^tI_{\{s\le \tau_n\}}\sigma_s(B)ds+\widetilde{B}^n_t
\end{equation}
 with $\Q^n$ñ Brownian motion $\widetilde{B}^n_t$ stopped at time $\tau_n$, having the predictable
 variation $\langle \widetilde{B}\rangle_t=t\wedge\tau_n$.
Consequently
$
\widetilde{\E}^n\int_0^tI_{\{s\le \tau_n\}}\sigma_s(B)dB_s=\widetilde{\E}^n\int_0^tI_{\{s\le \tau_n\}}\sigma^2_s(B)ds
$
and, in view of condition \eqref{BCC},
$$
\widetilde{\E}^n\int_0^tI_{\{s\le \tau_n\}}\sigma^2_s(B)ds\le \texttt{r}\big[1+\widetilde{\E}^n\int_0^tI_{\{s\le \tau_n\}}\sup_{s'\le s}B^2_{s'}ds\big].
$$
So, it suffices to prove that
\begin{equation}\label{eq:sp}
\sup\limits_n\widetilde{\E}^n\int_0^T\sup_{s'\le s}B^2_{s\wedge\tau_n}ds<\infty.
\end{equation}
Condition \eqref{eq:22/} and Cauchy-Schwartz inequality enable us to use the following inequality
\begin{gather*}
\sup_{t'\le t} B^2_{t'\wedge\tau_n}\le 2\bigg|\int_0^tI_{\{s\le \tau_n\}}|\sigma_s(B)|ds\bigg|^2+2\sup_{t'\le t}|\widetilde{B}^n_{t'\wedge\tau_n}|^2
\\
\le 2t\int_0^tI_{\{s\le \tau_n\}}\sigma^2_s(B)ds+2\sup_{t'\le t}|\widetilde{B}^n_{t'\wedge\tau_n}|^2
\\
\le 2t\int_0^tI_{\{s\le \tau_n\}}\texttt{r}\bigg[1+\sup_{s'\le s}\sigma^2_{s'}(B)\bigg]ds+
2\sup_{t'\le t}|\widetilde{B}^n_{t'\wedge\tau_n}|^2.
\end{gather*}
Moreover, the Doob maximal inequality  for square integrable martingales yields\\
$\widetilde{\E}^n
\sup_{t'\le t}|\widetilde{B}^n_{t'\wedge\tau_n}|^2\le 4(t\wedge\tau_n)$.
These inequalities allow  us to use  Gronwall-Bellman inequality
$
V^n_t\le \texttt{r}\big[1+\int_0^tV^n_sds\big], \ t\in[0,T],
$
where
$V^n_t:=\widetilde{\E}^n\sup_{t'\le t}B^2_{t\wedge\tau_n}$,
and obtain \eqref{eq:sp}.
\end{proof}

\section{\bf General model }\label{Sec-3}

Assume a martingale $M=M^c+M^d$  is a part of some semimartingale $X$.
While it may not be `the most general case',   it covers, however, most examples met in applications.
Other generalizations in the spirit of our arguments are possible but we decided not pursue them here, as the reader will see that the paper is already technical enough.

Introduce the following notations and assumptions.
\begin{itemize}
  \item $(\varOmega, F, (\mathcal{F}_t)_{t\in[0,T]}, \mathsf{P})$ is a stochastic basis satisfying the
general conditions.
  \item $\mathbb{C}=\mathbb{C}_{[0,T]}$ è $\mathbb{D}=\mathbb{D}_{[0,T]}$ are the spaces of continuous
functions and the Skorokhod space of c\'adl\'ag functions.

  \item
      $x_{[0,t)}=\{x_{t'}:t'<t\}$ and $x_{[0,t]}=\{x_{t'}:t'\le t\}$.

\item $(B_t)_{t\in[0,T]}$ is standard  Brownian motion.

\item $\mu(dt,dz)$ is integer  valued random measure on
$[0,T]\times\mathbb{R}_+$ (see \cite{JSh} and \cite{LSMar}).

\item $\nu(dt,dz):=dtK(dz)$ is a compensator  (Lev\'y's measure) of $\mu(dt,dz)$,
here $K(dz)$ is $\sigma$ -finite measure supported on $\mathbb{R}$ such that
\begin{equation*}
\int\limits_\mathbb{R}z^2K(dz)<\infty.
\end{equation*}

\item $a_s(x)$, $b_s(x)$, $\sigma_s(x)$ and $h_s(x,z)$, $\varphi_s(x,z)$ are measurable
(with respect to appropriate $\sigma$-algebra) functions
of arguments $(s,x_{[0,s)})$ and $(s,x_{[0,s)}),z)$, where $s\in[0,T]$, $x_{[0,s)}\in
\mathbb{D}_{[0,T]}$, $z\in\mathbb{R}$; for any $x\in \mathbb{D}_{[0,T]}$ functions
$a_s(x)$, $b_s(x)$, $\sigma_s(x)$ are square integrable with respect to $ds$ on $[0,T]$
as well as for any $x\in\mathbb{D}_{[0,T]}$ and any $z\in \mathbb{R}$ functions
$h_s(x,z)$ and $\varphi_s(x,z)$ are square integrable with respect to $dsK(dz)$
on $[0,T]\times \mathbb{R}$; \footnote{in the case of explosion these properties are assumed to be valid
on any open time interval up to the  moment of explosion.}

 Furthermore, we assume that
\begin{equation*}
\varphi_s(x,z)>-1
\end{equation*}
which is equivalent to $\Delta M_s>-1$.
\item \texttt{r} is a generic positive constant taking different values in different
places
\item $\inf\{\varnothing\}=\infty$.
\end{itemize}

\medskip
\noindent
The semimartingale  $X$ is assumed to be a unique weak solution of the It\^o equation:
\begin{multline}\label{eq:sde}
X_t=X_0+\int_0^ta_s(X)ds+\int_0^tb_s(X)dB_s
+\int_0^t\int_{\mathbb{R}}h_s(X,z)[\mu(ds,dz)-dsK(dz)].
\end{multline}
The martingale  $M=M^c+M^d$ driven by $X$  is given by
\begin{equation}\label{eq:Mc}
 M_t=\underbrace{\int_0^t\sigma_s(X)dB_s}_{=M^c_t}
+\underbrace{\int_0^t\int_\mathbb{R}\varphi_s(X,z)[\mu(ds,dz)-dsK(dz)]}_{=M^d_t}.
\end{equation}

\section{$\pmb{X}$ is a non-explosive Markov process}\label{sec-3.1}

\subsection{Main result. Examples}
\label{sec-3.1.1}

Since
$X$ is a Markov process, we refine its description, that is, we replace \eqref{eq:sde} by the It\^o equation:
\begin{align*}
X_t=X_0+\int_0^ta_s(X_{s-})ds+\int_0^tb_s(X_{s-})dB_s
+\int_0^t\int_{\mathbb{R}}h_s(X_{s-},z)[\mu(ds,dz)-dsK(dz)],
\end{align*}
where $a_s(X_{s-}):=a(s,X_{s-})$, $b_s(X_{s-})=b(s,X_{s-})$ è $h_s(X_{s-},z)=h(s,X_{s-},z)$.

We use the same notations  in \eqref{eq:uzj}:
\begin{multline*}
\mathfrak{z}_t=1+\int_0^t\mathfrak{z}_{s-}dM_s
\\
=1+\int_0^t\mathfrak{z}_{s-}\Big[\sigma_s(X_{s-})dB_s
+\int_0^t\int_\mathbb{R}\varphi_s(X_{s-},z)[\mu(ds,dz)-dsK(dz)\Big].
\end{multline*}

Introduce the following operators  (depending on $s$), acting on $ (x_s)_{s\in[0,T]}\in \mathbb{D}$
\begin{equation*}
L_s(x_{s-}):=2x_{s-} a_s(x_{s-})+b^2_s(x_{s-})+\int_\mathbb{R}h^2_s(x_{s-},z);
\end{equation*}
\begin{multline}\label{eq:magl+}
\mathfrak{L}_s(x_{s-}):=2x_{s-}\Big[a_s(x_{s-})+b_s(x_{s-})\sigma_s(x_{s-})
+\int_\mathbb{R}h_s(x_{s-},z)\varphi_s(x_{s-},z)K(dz)\Big]
\\
+b^2_s(x_{s-})+\int_\mathbb{R}h^2_s(x_{s-},z)K(dz)+\int_\mathbb{R}
h^2_s(x_{s-},z)\varphi_s(x_{s-},z)K(dz).
\end{multline}

The role of the operator  $L_s(x_{s-})$ is clarified as follows.
\begin{proposition}\label{lem-none}
Let $X^2_0\le \texttt{r}$ and $L_s(x_{s-}) \le \texttt{r}[1+x^2_{s-}]$.
Then \begin{equation*}
\sup_{t\in[0,T]}\E X^2_t<\infty,
\end{equation*}
that is, the process $X$ does not explode  on any finite time interval $[0,T]$.
\end{proposition}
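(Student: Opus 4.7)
The plan is to recognize $L_s$ as the value of the extended infinitesimal generator of $X$ applied to the function $f(x)=x^2$, and then to run the standard Khasminskii/Lyapunov moment estimate: apply It\^o's formula to $X_t^2$, take expectation to kill the martingale part, use the growth bound on $L_s$, and close with Gronwall. Localization is needed because we do not yet know the integrals make sense.

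Concretely, I would set $\tau_n=\inf\{t:|X_t|\ge n\}$ and apply the It\^o formula to $X^2_{t\wedge\tau_n}$ using the SDE for $X$. Because $d\langle X^c\rangle_s=b_s^2(X_{s-})\,ds$ and the sum of squared jumps has compensator $\int_\mathbb{R}h_s^2(X_{s-},z)K(dz)\,ds$, one gets
\begin{equation*}
X^2_{t\wedge\tau_n}=X_0^2+\int_0^{t\wedge\tau_n}L_s(X_{s-})\,ds+N^n_t,
\end{equation*}
where $N^n$ collects the $dB_s$-integral with integrand $2X_{s-}b_s(X_{s-})$ and the $[\mu-\nu]$-integral with integrand $2X_{s-}h_s(X_{s-},z)+h_s^2(X_{s-},z)$. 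On $\{s\le\tau_n\}$ we have $|X_{s-}|\le n$, so the hypothesis $L_s(x_{s-})\le\texttt{r}[1+x_{s-}^2]$ together with Cauchy--Schwarz forces each of these integrands to be square integrable with respect to $ds$, respectively $ds\,K(dz)$. Hence $N^n$ is a genuine (not merely local) martingale and $\E N^n_t=0$.

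Taking expectations and invoking the growth bound yields
\begin{equation*}
\E X^2_{t\wedge\tau_n}\le \E X_0^2+\texttt{r}\int_0^t\bigl[1+\E X^2_{s\wedge\tau_n}\bigr]\,ds,
\end{equation*}
so Gronwall--Bellman gives $\sup_{t\in[0,T]}\E X^2_{t\wedge\tau_n}\le(\E X_0^2+\texttt{r}T)\,e^{\texttt{r}T}=:C$, with $C$ independent of $n$. Passing $n\to\infty$ via Fatou gives $\sup_{t\in[0,T]}\E X_t^2\le C<\infty$; the finiteness on $\{\tau:=\lim_n\tau_n\le T\}$ simultaneously forces $\mathsf{P}(\tau\le T)=0$, which is exactly the assertion that $X$ does not explode on $[0,T]$.

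The one point that is not fully automatic is the justification that $N^n$ is a true martingale on $[0,T]$, particularly the jump integrand involving $h_s^2$: it requires observing that the bound on $L_s$ packages control of $\int_\mathbb{R}h_s^2(x,z)K(dz)$, and that possible overshoot of $|X_{\tau_n}|$ past $n$ at a jump does not affect the argument because the integrand is evaluated at $X_{s-}$ on $\{s\le\tau_n\}$. Everything else is a routine Gronwall argument.
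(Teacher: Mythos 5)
Your proof follows essentially the same route as the paper's: localize at the first exit of $X^2$ from $[0,n)$, apply It\^o's formula to $X^2_{t\wedge\tau_n}$ so that the drift term is exactly $L_s$, take expectations to kill the martingale part, close with Gronwall--Bellman, and pass to the limit by Fatou. One small caveat: the bound $L_s(x_{s-})\le\texttt{r}[1+x^2_{s-}]$ by itself does not control $b^2_s+\int_\mathbb{R}h^2_s\,K(dz)$ (the term $2x_{s-}a_s(x_{s-})$ could be very negative and cancel it), so the square integrability of the stopped martingale part should be attributed to the paper's standing assumptions that $a_s,b_s$ are square integrable in $ds$ and $h_s$ in $ds\,K(dz)$, rather than to the $L_s$ bound as your last paragraph suggests.
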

\begin{proof}
Applying the It\^o formula to  $X^2_t$ we obtain
$
X^2_t=X^2_0+\int_0^tL_s(X_{s-})ds+\mathcal{M}_t,
$
where $\mathcal{M}_t$ is a local martingale.

Note also that the definition of
$L_s(x_{s-})$
and
$\zeta_n=\inf\{t:X^2_t\ge n\}$ provide  $\E X^2_{t\wedge\zeta_n}<\infty$. So,
$
X^2_{t\wedge\zeta_n}=X^2_0+\int_0^tI_{\{s\le\zeta_n\}}L_s(X_{s-})ds +\mathcal{M}_{t\wedge\zeta_n},
$
implies
$\E\mathcal{M}^2_{t\wedge\tau_n}<\infty$ and
$\E \mathcal{M}_{t\wedge\zeta_n}=0$.
Also  $V^n_t:=\E X^2_{t\wedge\zeta_n}$ solves Gronwall-Bellman's inequality
$$
\E V^n_n\le \texttt{r}\big[1+\int_0^tV^n_sds\big].
$$
Hence
$
\E X^2_{t\wedge\zeta_n}\le \texttt{r}e^{\texttt{r}t}
$
and by Fatou theorem
$
\E X^2_{t}\le \texttt{r}e^{\texttt{r}t}.
$

So, $\sup_{t\in[0,T]}\E X^2_t\le \texttt{r}e^{\texttt{r}T}$.
\end{proof}

\begin{remark}\label{rem-3a}
Looking ahead let us clarify a role of the operator $\mathfrak{L}_s(x_{s-})$. Assume
$\E\mathfrak{z}_{_T}=1$. So $\Q\ll\mathsf{P}$ with $\frac{d\mathsf{P}}{d\mathsf{P}}=\mathfrak{z}_{_T}$
is the probability measure. Then the operator  $\mathfrak{L}_s(x_{s-})$ plays the role of
the operator
$L_s(x_{s-})$ for the process $X_t$ under the new measure  $\Q$.
\end{remark}

\begin{theorem}\label{theo-4.1.2}
Ïóñòü $|X_0|\le \texttt{r}$ è

{\rm 1)} $\sigma^2_s(x_{s-})+\int_\mathbb{R}\varphi^2_s(x_{s-},z)K(dz)\le \texttt{r}\Big[1+x^2_{s-}\Big]$

{\rm 2)} $L_s(x_{s-})\le \texttt{r}[1+x^2_{s-}]$

{\rm 3)} $\mathfrak{L}_s(x_{s-})\le \texttt{r}[1+x^2_{s-}]$

\noindent
Then $ (\mathfrak{z}_t)_{t\in[0,T]}$ is the martingale for any $T>0$.
\end{theorem}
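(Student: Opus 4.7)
The plan is to adapt the localization/Hitsuda argument from Theorems \ref{theo-2.0.1}--\ref{theo-2.0.2} to the general jump--diffusion setting. Introduce the stopping times
$$\tau_n=\inf\{t:(\mathfrak{z}_t\vee X^2_t)\ge n\},$$
so that $\mathfrak{z}_{t\wedge\tau_n}$ is a bounded, hence true, $\P$-martingale with $\E\mathfrak{z}_{T\wedge\tau_n}=1$ (assumption 2) together with Proposition \ref{lem-none} ensures $\tau_n\to\infty$ $\P$-a.s.). By de la Vall\'ee Poussin with $x\log x$, it suffices to show $\sup_n\E\bigl[\mathfrak{z}_{T\wedge\tau_n}\log\mathfrak{z}_{T\wedge\tau_n}\bigr]<\infty$. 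Performing the change of measure $d\Q^n=\mathfrak{z}_{T\wedge\tau_n}\,d\P$, this is equivalent to $\sup_n\widetilde{\E}^n\log\mathfrak{z}_{T\wedge\tau_n}<\infty$.

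The next step is to bound $\log\mathfrak{z}_{t\wedge\tau_n}$ via the Dol\'eans--Dade formula \eqref{eq:DoLdu}. Since $\varphi_s(X_{s-},z)>-1$, the elementary inequality $\log(1+x)-x\le 0$ for $x>-1$ makes the jump correction $\sum_{s\le t}[\log(1+\triangle M_s)-\triangle M_s]$ nonpositive, so that
$$\log\mathfrak{z}_{t\wedge\tau_n}\le M_{t\wedge\tau_n}.$$
The generalized Girsanov theorem (Appendix \ref{sec-A}) then gives, on $[0,\tau_n]$, the $\Q^n$-Brownian motion $\widetilde{B}^n_t=B_{t\wedge\tau_n}-\int_0^{t\wedge\tau_n}\sigma_s(X_{s-})\,ds$ and the $\Q^n$-compensator $(1+\varphi_s(X_{s-},z))\,dsK(dz)$ of $\mu$. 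Computing the $\Q^n$-drift of $M_{t\wedge\tau_n}$ yields
$$\widetilde{\E}^n M_{T\wedge\tau_n}=\widetilde{\E}^n\int_0^{T\wedge\tau_n}\Bigl[\sigma^2_s(X_{s-})+\int_\mathbb{R}\varphi^2_s(X_{s-},z)K(dz)\Bigr]ds,$$
which, by assumption 1), is dominated by $\texttt{r}\bigl(T+\widetilde{\E}^n\int_0^T X^2_{s\wedge\tau_n}\,ds\bigr)$.

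The whole problem therefore reduces to the bound $\sup_n\widetilde{\E}^n X^2_{T\wedge\tau_n}<\infty$; this is exactly the content of Remark \ref{rem-3a}. The idea is to run the proof of Proposition \ref{lem-none} under $\Q^n$ instead of $\P$. Applying It\^o's formula to $X^2_t$ and then absorbing the Girsanov drift corrections on $[0,\tau_n]$ adds the cross terms $2X_{s-}b_s(X_{s-})\sigma_s(X_{s-})$ and $2X_{s-}\int h_s\varphi_sK(dz)$ into the drift, and reweights the jump variation term by the extra factor $1+\varphi_s(X_{s-},z)$; assembling everything yields the $\Q^n$-drift $\int_0^{t\wedge\tau_n}\mathfrak{L}_s(X_{s-})\,ds$ with $\mathfrak{L}$ as in \eqref{eq:magl+}. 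Assumption 3) then delivers the Gronwall--Bellman inequality $V^n_t\le\texttt{r}\bigl[1+\int_0^t V^n_s\,ds\bigr]$ for $V^n_t:=\widetilde{\E}^n X^2_{t\wedge\tau_n}$, giving $V^n_T\le\texttt{r}e^{\texttt{r}T}$ uniformly in $n$.

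The main obstacle is bookkeeping rather than conceptual: one must carefully verify that the $\Q^n$-compensators of $B$ and $\mu$ combine, via It\^o's formula, into \emph{precisely} the operator $\mathfrak{L}_s$ appearing in \eqref{eq:magl+}, since every one of its six summands has a different origin (two from $2X_{s-}\,dX_s$ under the new drift, one quadratic variation from $b_s^2$, and the jump integral splits as $\int h_s^2K+\int h_s^2\varphi_s K$ because the $\Q^n$-compensator carries the $1+\varphi_s$ factor). The localization by $\tau_n$ is exactly what is needed to upgrade the local $\Q^n$-martingales produced by It\^o to genuine $\Q^n$-martingales, so that taking $\widetilde{\E}^n$ kills them cleanly and the Gronwall argument closes.
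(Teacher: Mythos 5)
Your proposal follows essentially the same route as the paper: the same stopping times $\tau_n$, the de la Vall\'ee Poussin $x\log x$ test after the change of measure $d\Q^n=\mathfrak{z}_{_{T\wedge\tau_n}}d\P$, the bound $\log\mathfrak{z}_{_{T\wedge\tau_n}}\le M_{T\wedge\tau_n}$ from $\varphi>-1$, the generalized Girsanov theorem to compute the $\Q^n$-drift of $M$, and It\^o's formula for $X^2$ under $\Q^n$ producing exactly $\mathfrak{L}_s$ and a Gronwall--Bellman bound (this is the content of the paper's Lemma \ref{lem-unint} plus Section \ref{sec-4.2}). The one imprecision is the claim that $\mathfrak{z}_{t\wedge\tau_n}$ is \emph{bounded}: with jumps it need not be (the jump at $\tau_n$ is uncontrolled), but it is square integrable by the It\^o isometry together with condition 1) and the boundedness of $\mathfrak{z}_{(s\wedge\tau_n)-}$ and $X^2_{(s\wedge\tau_n)-}$, which is how the paper obtains $\E\mathfrak{z}_{_{T\wedge\tau_n}}=1$.
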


\mbox{}

\noindent
The proof of this theorem is given in Section \ref{sec-4.2}.
Now, we  illustrate its applications in various models.

To avoid repitions we omit  ``... the conditions of the theorem  are fulfilled ...'' and ``...for
any $T>0,..$'' and  use  a shorthand ``$\mathfrak{z}=\mathcal{E}(M)$''  instead, wherever it does not cause misunderstanding.

\begin{example}[\cite{BeLip}] {\rm $X$ is purely discontinuous martingale with independent increments. Let
$$
X_t=\int_0^t\int_{\mathbb{R}}z[\mu(ds,dz)-K(dz)ds] \ \text{and} \
M_t=\int_0^t\int_{\mathbb{R}}|X_{s-}z|[\mu(ds,dz)-K(dz)ds]
$$
Since
\begin{itemize}
  \item $X_0=0$,
  \item $a_s(x_{s-})=b_s(x_{s-})=0$ è $h_s(x_{s-},z)=z$,
  \item $\sigma_s(x_{s-})=0$ è $\varphi_s(x_{s-},z)=|x_{s-}z|$,
  \item $\int\limits_\mathbb{R}z^2K(dz)<\infty$,
\end{itemize}
then together with additional condition  $\int\limits_\mathbb{R}|z|^3K(dz)<\infty$ we have
$$
\left.
  \begin{array}{ll}
   \sigma^2_s(x_{s-})+ \int_\mathbb{R}\varphi^2_s(x_{s-},z)K(dz)= x^2_{s-}\int_\mathbb{R}z^2K(dz)&  \\
    L_s(x_{s-})= \int_\mathbb{R}z^2K(dz)&  \\
    \mathfrak{L}_s(x_{s-})=2x_{s-}|x_{s-}| \int_\mathbb{R}z|z|K(dz)+\int_\mathbb{R}z^2K(dz)+|x_{s-}|\int_\mathbb{R}|z|^3K(dz)&
  \end{array}
\right\}
$$
$$
\le \texttt{r}[1+x^2_{s-}].
$$

Thus, $\mathfrak{z}=\mathcal{E}(M)$.}
\end{example}

\begin{example}\label{ex-3.1.2}
{\rm Constant Elasticity of Variance, {\rm \cite {AP},
\cite{Cox}, \cite{DelShir}}.

Let
$$
X_t=1+\int_0^tX_sds+\int_0^t\sqrt{X^+_s}dB_s\quad\text{and}\quad
M_t=\int_0^t\sqrt{X^+_s}dB_s.
$$
Then,
\begin{itemize}
  \item $X_0=1$
  \item $a_s(x_{s-})=x_{s-}$,  $b_s(x_{s-})=\sqrt{x^+_{s-}}$ and $h_s(x_{s-},z)=0$,
  \item $\sigma_s(x_{s-})=\sqrt{x^+_{s-}}$ and $\varphi_s(x_{s-},z)=0$.
\end{itemize}
Consequently
$$
\left.
  \begin{array}{ll}
   \sigma^2_s(x_{s-})+ \int_\mathbb{R}\varphi^2_s(x_{s-},z)K(dz)=x^+_{s-} &  \\
    L_s(x_{s-})=2x^2_{s-} +x^+_{s-}&  \\
    \mathfrak{L}_s(x_{s-})=2x_{s-}[x_{s-}+x_{s-}^+]+x_{s-}^+ &
  \end{array}
\right\}\le \texttt{r}[1+x^2_{s-}]
$$
It should be noted that $\vartheta=\inf\{t:X_t=0\}$ is the time to
absorbtion at zero of the process $X_t$ (time to ruin), $\vartheta<\infty$ with a positive
probability, and $X_t=X_{t\wedge \vartheta}$.
Thus $(\mathfrak{z}_{_{t\wedge\vartheta}})_{t\le T}$ is a martingale for any $T>0$.}
\end{example}

\begin{example}\label{ex-3.1.3}
{\rm Cubic stabilizing drift. Let
$$
X_t=1-\int_0^tX^3_{s}ds+\int_0^tX_{s}dB \quad\text{and}\quad M_t=\int_0^tX_{s}dB_s.
$$
Then,
\begin{itemize}
\item $X_0=1$,
\item $a_s(x_{s-})=-x^3_{s-}$,  $b_s(x_{s-})=x_{s-}$ è $h_s(x_{s-},z)=0$,
  \item $\sigma_s(x_{s-})=x^+_{s-}$ è $\varphi_s(x_{s-},z)=0$.
\end{itemize}
So,
$$
\left.
  \begin{array}{ll}
   \sigma^2_s(x_{s-})=(x^+_{s})^2&  \\
    L_s(x_{s-})=-2x^4_s+x^2_s &  \\
    \mathfrak{L}_s(x_{s-})=-2x^4_s+x^2_s+2|x^3_s| &
  \end{array}
\right\}\le \texttt{r}[1+x^2_{s-}].
$$
Thus, $\mathfrak{z}={\mathcal E}(M)$.}
\end{example}

\begin{example}
\label{ex-3.1.4}{\rm Brownian Bridge. Zero mean
Gaussian process $X_t$ defined on the time interval is said Brownian bridge
if its correlation function $$
R(t',t)=(t'\wedge t)[1-(t'\vee t)].
$$
It is also  well known that $X_t$ is the unique solution of It\^o's equation
$$
X_t=-\int_0^t\frac{X_s}{1-s}ds+B_t, \quad t\in [0,1), \quad \lim_{t\uparrow 1}X_t=0.
$$
Let $M_t=\int_0^tX_sdB_s$,  $t\le 1$.

Here $T=1$ and
\begin{itemize}
\item $X_0=0$,
\item $a_s(x_{s-})=-\frac{x_{s-}}{1-s} \ (s<1)$,  $b_s(x_{s-})=x_{s-}$ è $h_s(x_{s-},z)=0$,
  \item $\sigma_s(x_{s-})=x_{s-}$ è $\varphi_s(x_{s-},z)=0$.
\end{itemize}
Therefore
$$
\left.
  \begin{array}{ll}
   \sigma^2_s(x_{s-})=x^2_{s-}&  \\
    L_s(x_{s-})=-2x^4_{s-}+x^2_{s-} &  \\
    \mathfrak{L}_s(x_{s-})=-2x^4_{s-}+x^2_s+2|x^3_{s-}| &
  \end{array}
\right\}\le \texttt{r}[1+x^2_{s-}]
$$
and by Theorem \ref{theo-4.1.2}, $\E\mathfrak{z}_1=1.$}
\end{example}

\begin{example} \label{ex-333} {\rm One extension of Mijitovic and Urusov
example (see \cite{MiUr})

Let $\alpha\in (-1,0]$ and
\begin{align*}
X_t=1+\int_0^t|X_{s}|^\alpha ds+B_t\quad\text{and}\quad M_t=\int_0^tX_{s}dB_s.
\end{align*}
In {\rm \cite{MiUr}}, it is shown  ïîêàçàíî, ÷òî
 $\mathfrak{z}={\mathcal E}(M)$.
Theorem \ref{theo-4.1.2} enables to show that $\mathfrak{z}={\mathcal E}_T(M)$ even if
$\alpha=-1$. In this case $X_t$ is the Bessel process {\rm (see e.g. Exercise 2.25 p. 197 \cite{RevYor})}, that is,
\begin{equation*}
X_t=1+\int_0^t\frac{ds}{X_s}+B_t.
\end{equation*}
Then
\begin{itemize}
\item $X_0=1$,
\item $a_s(x_{s-})=\frac{1}{x_{s-}\vee 0}$,  $b_s(x_{s-})=1$ è $h_s(x_{s-},z)=0$,
  \item $\sigma_s(x_{s-})=x_{s-}$ è $\varphi_s(x_{s-},z)=0$
\end{itemize}
and
$$
\left.
  \begin{array}{ll}
   \sigma^2_s(x_{s-})=x^2_{s-}&  \\
    L_s(x_{s-})= 3&  \\
    \mathfrak{L}_s(x_{s-})=3+2x_{s-} &
  \end{array}
\right\}\le \texttt{r}[1+x^2_{s-}].
$$
Therefore $\mathfrak{z}={\mathcal E}(M)$.}
\end{example}

\section{$\pmb{X} $ \bf is a past-dependent semimartingale}\label{sec-6}
\subsection{Main result. Examples}
Recall representations \eqref{eq:sde} and \eqref{eq:Mc}. Operators
$L_s(x)$ and  $\mathfrak{L}_s(x)$ are changed as follows
\begin{equation*}
L_s(x):=a^2_s(x)+b^2_s(x)+\int_\mathbb{R}h^2_s(x,z)K(dz),
\end{equation*}
\begin{equation}\label{eq:LLL}
\begin{array}{ll}
\hskip 1.3in\mathfrak{L}_s(x):=a^2_s(x)+b^2_s(x)+\int_\mathbb{R}h^2_s(x,z)K(dz)
+b^2_s(x)\sigma^2_s(x)
\\ \\
\hskip 1in+\int_\mathbb{R}h^2_s(x,z)K(dz)\int_\mathbb{R}\varphi^2_s(x,z)K(dz)
+\int_\mathbb{R}h^2_s(x,z)\varphi_s(x,z)K(dz).
\end{array}
\end{equation}

\begin{theorem}\label{theo-5.0.1}
If $|X_0|\le \texttt{r}$ è

{\rm 1)} $\sigma^2_s(x)+\int_\mathbb{R}\varphi^2_s(x,z)K(dz)\le
  \texttt{r}\bigg[1+\sup\limits_{s'<s}x^2_{s'}\bigg]$

{\rm 2)} $L_s(x)\le \texttt{r}\bigg[1+\sup\limits_{s'<s}x^2_{s'}\bigg]$

{\rm 3)} $\mathfrak{L}_s(x )\le \texttt{r}\bigg[1+\sup\limits_{s'<s}x^2_{s'}\bigg]$,

\noindent
then $\mathfrak{z}=\mathcal{E}(M)$.
\end{theorem}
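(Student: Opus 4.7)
The plan is to follow the template of Theorem \ref{theo-2.0.2} and Theorem \ref{theo-4.1.2}, with the key new ingredient being a Gronwall estimate for $\sup_{s'\le t}X^2_{s'}$ rather than $X^2_t$. Condition~3) is tailored so that the extra cross-terms that arise on the path-dependent side (from Cauchy--Schwarz applied to the drift, and from Doob/BDG applied to the jump part) are all absorbed by the operator $\mathfrak{L}_s$ defined in \eqref{eq:LLL}.

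\textbf{Step 1 (localization).} Introduce $\tau_n=\inf\{t:\mathfrak{z}_t\vee\sup_{s\le t}X^2_s\ge n\}$. Under condition~1), the stopped integrand in \eqref{eq:uzj} is bounded, so $\mathfrak{z}_{t\wedge\tau_n}$ is a genuine $\mathsf{P}$-martingale and $\E\mathfrak{z}_{_{T\wedge\tau_n}}=1$. Define $\Q^n\ll\mathsf{P}$ by $d\Q^n/d\mathsf{P}=\mathfrak{z}_{_{T\wedge\tau_n}}$ and let $\widetilde{\E}^n$ denote $\Q^n$-expectation. The generalized Girsanov theorem from Appendix~\ref{sec-A} gives: on $[0,T\wedge\tau_n]$, the process $\widetilde{B}^n_t=B_t-\int_0^{t\wedge\tau_n}\sigma_s(X)ds$ is $\Q^n$-Brownian motion, and the $\Q^n$-compensator of $\mu(ds,dz)$ becomes $\widetilde{\nu}^n(ds,dz)=(1+\varphi_s(X,z))K(dz)\,ds$.

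\textbf{Step 2 (reduction to a $\log$-moment).} By Hitsuda's device we apply Vall\'ee de Poussin with $x\log x$; the key identity $\E\mathfrak{z}_{_{T\wedge\tau_n}}\log\mathfrak{z}_{_{T\wedge\tau_n}}=\widetilde{\E}^n\log\mathfrak{z}_{_{T\wedge\tau_n}}$ reduces the uniform integrability of $(\mathfrak{z}_{_{T\wedge\tau_n}})$ to showing this $\log$-moment is bounded in $n$. From \eqref{eq:DoLdu} and the inequality $\log(1+x)-x\le 0$ for $x>-1$, together with the $\Q^n$-semimartingale decomposition of $M^c$ and $M^d$ on $[0,T\wedge\tau_n]$, one obtains
\begin{equation*}
\widetilde{\E}^n\log\mathfrak{z}_{_{T\wedge\tau_n}}\le \widetilde{\E}^n\int_0^{T\wedge\tau_n}\Big[\tfrac{1}{2}\sigma^2_s(X)+\int_{\mathbb{R}}\varphi^2_s(X,z)K(dz)\Big]ds,
\end{equation*}
so by condition~1) everything is reduced to proving $\sup_n\widetilde{\E}^n\int_0^T\sup_{s'\le s}X^2_{s'\wedge\tau_n}\,ds<\infty$.

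\textbf{Step 3 (Gronwall on $\sup X^2$).} Under $\Q^n$, \eqref{eq:sde} rewrites as
\begin{equation*}
X_{t\wedge\tau_n}=X_0+\int_0^{t\wedge\tau_n}\!\widetilde{a}^n_s(X)\,ds+\int_0^{t\wedge\tau_n}\!b_s(X)d\widetilde{B}^n_s+\int_0^{t\wedge\tau_n}\!\!\int_{\mathbb{R}}h_s(X,z)[\mu-\widetilde{\nu}^n](ds,dz),
\end{equation*}
where $\widetilde{a}^n_s=a_s+b_s\sigma_s+\int h_s\varphi_s K(dz)$. Squaring, taking $\sup_{t'\le t}$, and applying Cauchy--Schwarz on the drift, Doob's $L^2$ inequality on the continuous martingale, and the BDG inequality on the compensated jump martingale (with new compensator $\widetilde{\nu}^n$), one obtains
\begin{equation*}
V^n_t:=\widetilde{\E}^n\sup_{t'\le t}X^2_{t'\wedge\tau_n}\le \texttt{r}\Big[1+\widetilde{\E}^n\int_0^{t\wedge\tau_n}\mathfrak{L}_s(X)\,ds\Big],
\end{equation*}
where the six summands of $\mathfrak{L}_s$ in \eqref{eq:LLL} match exactly the six contributions produced by this splitting (including the term $\int h^2\varphi K$ coming from $d\widetilde{\nu}^n-d\nu$). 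By condition~3), Gronwall--Bellman yields $V^n_T\le\texttt{r}e^{\texttt{r}T}$, uniformly in $n$. Combined with Steps~1--2 this gives uniform integrability of $(\mathfrak{z}_{_{T\wedge\tau_n}})$, hence $\E\mathfrak{z}_{_T}=1$.

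The main obstacle is Step~3: checking carefully that the square terms produced by Cauchy--Schwarz on $\widetilde{a}^n_s$, together with the quadratic variation terms from BDG applied under the new compensator $\widetilde{\nu}^n$, reconstruct \emph{precisely} the operator $\mathfrak{L}_s$ of \eqref{eq:LLL}, with no extra uncontrolled term. A minor care is also needed to verify that all local martingales involved are genuine martingales on $[0,T\wedge\tau_n]$ under $\Q^n$ (ensured by the double stopping by $\tau_n$ and conditions~1)--2)), so that Doob/BDG apply and local-martingale parts vanish after taking $\widetilde{\E}^n$.
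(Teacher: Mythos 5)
Your proposal is correct and follows essentially the same route as the paper: the same stopping times $\tau_n$, the same Hitsuda/Vall\'ee de Poussin reduction of uniform integrability to $\sup_n\E\mathfrak{z}_{_{T\wedge\tau_n}}\log\mathfrak{z}_{_{T\wedge\tau_n}}<\infty$ (which the paper isolates as Lemma \ref{lem-unint}), the generalized Girsanov decomposition of Theorem \ref{theo-A}, and the Cauchy--Schwarz/Doob/Gronwall estimate on $\widetilde{\E}^n\sup_{t'\le t}X^2_{t'\wedge\tau_n}$ absorbed into $\mathfrak{L}_s$ of \eqref{eq:LLL}. The only cosmetic difference is that you invoke BDG for the jump martingale where the paper uses Doob's $L^2$ maximal inequality with the predictable quadratic variation $\langle\widetilde{\mathcal{M}}^{d,n}\rangle$.
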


Note that the process $X$ does not explode due to assumption  2).

\noindent
The proof of this theorem is given in Section \ref{sec-6.3}.

\begin{example} {\rm Weak existence  for a past-dependent SDE with unit diffusion.

We show that a stochastic differential equation
\begin{equation*}
X_t=\int_0^ta_s(X)ds+B_t.
\end{equation*}
has a weak solution on any time interval $[0,T]$ if
\begin{equation*}
a^2_s(y)\le \texttt{r}\big[1+\sup_{s'\le  s}y^2_{s}\big], \ (y_s)_{s\in[0,T]}\in\mathbb{C}
\end{equation*}
{\rm(cf   assumptions of Theorem 7.2, Ch. 7, \S 7.2 â \cite{LSI})}.

Set  $M_t= \int_0^t\sigma_s(B)dB_t$ with
$\sigma_s(y)\equiv a_s(y)$. Then by Theorem \ref{theo-5.0.1},
$\E\mathfrak{z}_{_T}=1$. So, there exists a probability measure $\Q\ll\mathsf{P}$,  $\frac{d\Q}{d\mathsf{P}}=\mathfrak{z}_{_T}$. Hence, by Girsanov theorem,
the process $(B_t,\Q)_{t\in[0,T]}$ is nothing but a weak solution of It\^o's equation
$
B_t=\int_0^ta_s(B)ds+\widetilde{B}_t
$
with $\Q$-Brownian motion $\widetilde{B}_t$.

{\rm Note that weak uniqueness of this equation also holds  (see Theorem 4.12 in \cite[Ch. 4]{LSI})}}.
\end{example}

\begin{example} {\rm A past-dependent SDE with a singular diffusion.
Assume the It\^o equation
$
X_t=X_0+\int_0^tb_s(X)dB_s
$
with $b^2_s(x)\ge 0$ obeys a weak solution. Then  equation with drift 
\begin{equation*}\label{eg:X=SB}
X_t=X_0+\int_0^ta_s(X)ds+\int_0^tb_s(X)dB_s.
\end{equation*}
also has a weak solution.
Suppose
\begin{itemize}
  \item $X^2_0\le \texttt{r}$
  \item $a_s(x)=a_s(x)I_{\{b^2(x)>0\}}$
  \item
$
\left.
          \begin{array}{ll}
            a^2_s(x)&  \\
            b^2_s(x) & \\
            \frac{a^2_s(x)}{b^2_s(x)}I_{\{b^2_s(x)>0\}} &
          \end{array}
        \right\}\le \texttt{r}\Big[1+\sup\limits_{s'\le s}x^2_{s'}\Big], \ s\in[0,T], \ (x_s)_{s\in[0,T]}
\in \mathbb{C}.
$
\end{itemize}
{\rm(cf Ch. 7, \S 7.6, Theorem 7.19 in \cite{LSI})}.
To this end, we choose
$$
\sigma_s(x)= \frac{a_s(x)}{b_s(x)}I_{\{b^2_s(x)>0\}}\quad\text{and}\quad
M_t=\int_0^t \sigma_s(B)dB_s
$$
and then apply Theorem \ref{theo-5.0.1} with
$
\mathfrak{z}_t=1+\int_0^t\mathfrak{z}_s\sigma_s(x)dB_s.
$
Since  $\E\mathfrak{z}_{_T}=1$ there exists the probability measure
$\Q\ll\mathsf{P}$ with density
$\frac{d\Q}{d\mathsf{P}}=\mathfrak{z}_{_T}$.

Then, by Girsanov theorem
$$
X_t=X_0+\int_0^t\underbrace{b_s(X)\sigma_s(X)}_{=a_s(X)}ds+\int_0^tb_s(X)d\widetilde{B}_s,
$$
with a $\Q$-Brownian motion  $\widetilde{B}_t$}.
\end{example}

\begin{example}{\rm SDE with delay.
Theorem \ref{theo-5.0.1} is applicable to semimartingale with past-dependent characteristic
in a form of delay.  This stochastic model is used often in modern stochastic control.

Let  $\vartheta>0$ denote fixed delay parameter in It\^o's equation
 $$
 X_t =I_{\{X_u\in[-\theta, 0]\}}+\int_0^ta_s(X_{s-\vartheta})ds+\int_0^tb_s(X_{s-\vartheta})dB_s.
$$
Let $M_t=\int_0^t \sigma_s(X_{s-\vartheta})dB_s$ and so
$
\mathfrak{z}_t=1+\int_0^t\mathfrak{z}_s\sigma_s(X_{s-\vartheta})dB_s.
$

Then, by Theorem \ref{theo-5.0.1}, $\mathfrak{z}={\mathcal E}(M)$ if the following conditions are satisfied: $$
\left.
\begin{array}{ll}
a^2_s(x_{s-\vartheta})+b^2_s(x_{s-\vartheta})&
\\
\sigma^2_s(x_{s-\vartheta}) &
\\
 \sigma^2_s(x_{s-\vartheta})b^2_s(x_{s-\vartheta})
&
  \end{array}
\right\}\le \texttt{r}\Big[1+\sup\limits_{s'\le s}x^2_{s'}\Big], \ s\in[0,T], \ (x_s)_{s\in[0,T]}
\in \mathbb{C}.
$$
}
\end{example}

\section{\bf Proofs}\label{secProofs}

\subsection{Auxiliary result}

Proofs of Theorem    \ref{theo-2.0.1} and  Theorem \ref{theo-5.0.1} follow the same idea, and rely on an auxiliary result that allows to check uniform integrability in terms of
$$
\sup_n\E\mathfrak{z}_{_{T\wedge\tau_n}}\log(\mathfrak{z}_{_{T\wedge\tau_n}})<\infty.
$$
Let $X$ and $M$ be defined by \eqref{eq:sde} and \eqref{eq:Mc}, and
$$
\mathfrak{z}_t=1+\int_0^t\mathfrak{z}_{s-}\sigma_s(X)dB_s
+\int_0^t\int_\mathbb{R}\mathfrak{z}_{s-}
\varphi_s(X,z)[\mu(ds,dz)-dsK(dz)].
$$
 Set
the localizing sequences:
\begin{equation}\label{eq:taun}
\begin{array}{cc}
\tau_n=\inf\{t:(\mathfrak{z}_t\vee X^2_t)\ge n \} \quad\text{in the proof of Theorem \ref{theo-2.0.1}}
\\
and
\\
\tau_n=\inf\{t:(\mathfrak{z}_t\vee \sup_{s\le t}X^2_s)\ge n \}\quad\text{ in the proof of Theorem \ref{theo-5.0.1}}
\end{array}
\end{equation}
and notice that $\mathfrak{z}_{(s\wedge\tau_n)-}$ and $X^2_{(s\wedge\tau_n)-}$ are bounded processes.
Since Doleans-Dade's formula \eqref{eq:DoLdu}, with martingales $M^c_t$ and $M^d_t$ defined
in \eqref{eq:Mc}, is the unique solution of SDE
$$
\mathfrak{z}_t=1+\int_0^t\mathfrak{z}_{s-}\sigma_s(X)dB_s
+\int_0^t\int_\mathbb{R}\mathfrak{z}_{s-}
\varphi_s(X,z)[\mu(ds,dz)-dsK(dz)],
$$
we have that
\begin{align}\label{eq:34u}
\mathfrak{z}_{_{t\wedge\tau_n}}&=1+\int_0^tI_{\{s\le\tau_n\}}\mathfrak{z}_{_{(s\wedge\tau_n)-}}
\sigma_s(X)
dB_s
\nonumber\\
&\quad
+\int_0^t\int_\mathbb{R}I_{\{s\le\tau_n\}}\mathfrak{z}_{_{(s\wedge\tau_n)-}}
\varphi_s(X,z)[\mu(ds,dz)-dsK(dz)].
\end{align}
Hence
\begin{equation*}\label{zBound}
\E\big(\mathfrak{z}_{_{T\wedge\tau_n}}-1\big)^2=\E\int_0^TI_{\{s\le\tau_n\}}\mathfrak{z}^2_{_{(s\wedge\tau_n)-}}
\Big(\sigma^2_s(X)+\int_\mathbb{R}\varphi^2_s(X,z)K(dz)\Big)ds.
\end{equation*}
By one of the (BC) conditions:
for any $\ s\in[0,T], \ (x_s)_{s\le T}\in\mathbb{D}$
\begin{align}
\label{eq:B-} \sigma^2_s(x)+\int_\mathbb{R}\varphi^2_s(x,z)K(dz)\le \texttt{r}\left\{
  \begin{array}{ll}
 \Big[1+x^2_{s-}\Big]& \text{ if $X$ - (BC-Markov)}\\
\\
 \Big[1+\sup_{s'< s}x^2_{s'}\Big]& \text{ if $X$ - (BC-Past Dependent)}
  \end{array}
\right.
\end{align}
It now follows from  \eqref{zBound} that $\mathfrak{z}_{_{t\wedge\tau_n}}$ is a square integrable martingale with $\E\mathfrak{z}_{_{t\wedge\tau_n}}=1$.

\begin{lemma}\label{lem-unint}
The  family $\{\mathfrak{z}_{_{T\wedge\tau_n}}\}_{n\ge 1}$
is uniformly integrable if
\begin{equation*}
\left.
\begin{array}{ll}
\sup_n\widetilde{\E}^n\int_0^TX^2_{s\wedge\tau_n}ds, & \text{in Theorem \ref{theo-4.1.2}}
\\ \\
\sup_n\widetilde{\E}^n\int_0^T\sup_{s'< s}X^2_{s'\wedge\tau_n}ds, & \text{in Theorem \ref{theo-5.0.1}}
  \end{array}
\right\}<\infty.
\end{equation*}
\end{lemma}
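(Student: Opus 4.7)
The plan is to apply the de la Vallée Poussin criterion with the convex function $\phi(x)=x\log x$: since $\phi(x)\ge -1/e$, the family $\{\mathfrak{z}_{T\wedge\tau_n}\}$ is uniformly integrable as soon as $\sup_n\E\mathfrak{z}_{T\wedge\tau_n}\log\mathfrak{z}_{T\wedge\tau_n}<\infty$. Because $\E\mathfrak{z}_{T\wedge\tau_n}=1$ (already established in the preamble to the lemma), we may switch to the measure $\Q^n\ll\P$ with $d\Q^n/d\P=\mathfrak{z}_{T\wedge\tau_n}$ and rewrite the quantity of interest as $\widetilde{\E}^n\log\mathfrak{z}_{T\wedge\tau_n}$. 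This is the same reduction that was used in the proofs of Theorems~\ref{theo-2.0.1} and~\ref{theo-2.0.2}, now carried out in the presence of jumps.

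Next I would bound $\log\mathfrak{z}_t$ pointwise. From the Dol\'eans-Dade representation \eqref{eq:DoLdu} one has
$$
\log\mathfrak{z}_t=M_t-\tfrac12\langle M^c\rangle_t+\sum_{s\le t}\bigl[\log(1+\triangle M_s)-\triangle M_s\bigr],
$$
and since $\langle M^c\rangle_t\ge 0$ and $\log(1+x)\le x$ for $x>-1$, both correction terms are nonpositive, so $\log\mathfrak{z}_t\le M_t$. Hence it suffices to show $\sup_n\widetilde{\E}^n M_{T\wedge\tau_n}<\infty$.

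Now I would invoke the generalized Girsanov theorem (Appendix~\ref{sec-A}) to describe $M$ under $\Q^n$. The compensator of $\mu$ changes from $ds\,K(dz)$ to $(1+\varphi_s(X,z))\,ds\,K(dz)$ on $[0,\tau_n]$, and $B^{\Q^n}_t:=B_t-\int_0^t I_{\{s\le\tau_n\}}\sigma_s(X)\,ds$ is a $\Q^n$-Brownian motion. Substituting into \eqref{eq:Mc} and separating drift from the $\Q^n$-martingale pieces,
$$
M_{t\wedge\tau_n}=\int_0^{t\wedge\tau_n}\!\sigma_s^2(X)\,ds+\int_0^{t\wedge\tau_n}\!\!\int_{\mathbb R}\varphi_s^2(X,z)\,ds\,K(dz)+N^n_t,
$$
where $N^n_t$ is a $\Q^n$-local martingale. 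Up to $\tau_n$ the integrands $\mathfrak{z}_{s-}\sigma_s(X)$ and $\mathfrak{z}_{s-}\varphi_s(X,z)$ are bounded (by the definition of $\tau_n$ together with the (BC) bound \eqref{eq:B-}), so $N^n$ is actually a true $\Q^n$-martingale with $\widetilde{\E}^n N^n_T=0$, and the drift identity above integrates to an equality in expectation.

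Taking $\widetilde{\E}^n$ and using \eqref{eq:B-} then gives
$$
\widetilde{\E}^n\log\mathfrak{z}_{T\wedge\tau_n}\le\widetilde{\E}^n\int_0^T I_{\{s\le\tau_n\}}\Big[\sigma_s^2(X)+\int_{\mathbb R}\varphi_s^2(X,z)K(dz)\Big]ds\le\texttt{r}\Big(T+\widetilde{\E}^n\int_0^T \Xi_{s,n}\,ds\Big),
$$
with $\Xi_{s,n}=X^2_{s\wedge\tau_n}$ in the Markov setting of Theorem~\ref{theo-4.1.2} and $\Xi_{s,n}=\sup_{s'<s}X^2_{s'\wedge\tau_n}$ in the past-dependent setting of Theorem~\ref{theo-5.0.1}. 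Either hypothesis of the lemma now yields the required $n$-uniform bound. The main obstacle I anticipate is the bookkeeping step of justifying that $N^n$ is a genuine $\Q^n$-martingale (not merely local) so that the drift computation gives an equality; this is where the boundedness up to $\tau_n$ together with the linear growth condition \eqref{eq:B-} is essential, and it is the reason the two distinct stopping times in \eqref{eq:taun} had to be chosen to dominate $\mathfrak{z}$ as well as $X^2$ (respectively $\sup X^2$).
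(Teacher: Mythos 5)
Your proposal is correct and follows essentially the same route as the paper: de la Vall\'ee Poussin with $x\log x$, the bound $\log\mathfrak{z}_{T\wedge\tau_n}\le M_{T\wedge\tau_n}$ from the nonnegativity of the compensating term in the Dol\'eans--Dade exponential, and the identification of $\widetilde{\E}^n M_{T\wedge\tau_n}$ with $\widetilde{\E}^n\int_0^T I_{\{s\le\tau_n\}}\bigl(\sigma^2_s(X)+\int_{\mathbb R}\varphi^2_s(X,z)K(dz)\bigr)ds$, closed by condition \eqref{eq:B-}. The only (immaterial) difference is that you obtain this last identity from the Girsanov drift of $M$ under $\Q^n$ (as the paper itself does in Theorems \ref{theo-2.0.1} and \ref{theo-2.0.2}), whereas the paper's proof of the lemma computes $\E\,\mathfrak{z}_{T\wedge\tau_n}M_{T\wedge\tau_n}$ under $\mathsf{P}$ via the predictable covariation of the two square-integrable martingales --- two equivalent bookkeeping devices for the same quantity.
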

\begin{proof}
The existence of probability measure $\Q^n\ll \mathsf{P}$ with the density
$
\frac{d\Q^n}{d\mathsf{P}}=\mathfrak{z}_{_{T\wedge\tau_n}}
$
is obvious. Henceforth  $\widetilde{\E}^n$ is the expectation of  $\Q^n$ measure.
The uniform integrability of the family  $\{\mathfrak{z}_{_{T\wedge\tau_n}}\}_{n\to\infty}$
is verified by the Vall\'ee de Poussin theorem  with the function $x\log(x), x\ge 0$.
The formula
$
\sup\limits_n\E\mathfrak{z}_{_{T\wedge\tau_n}}\log\big(\mathfrak{z}_{_{T\wedge\tau_n}}\big)<\infty
$
is convenient since
$$
\mathfrak{z}_{_{t\wedge\tau_n}}=\exp\big(M_{t\wedge\tau_n}-A_{t\wedge\tau_n}\big),
$$
where $M_{t\wedge\tau_n}$ is a square integrable martingale and  $A_{t\wedge\tau_n}$ is an increasing positive process:
\begin{equation*}
\begin{array}{ll}
 M_{t\wedge\tau_n}=
\int_0^tI_{\{s\le\tau_n\}}\sigma_s(X)dB_s
\\
\hskip .5in+
\int_0^t\int_\mathbb{R}I_{\{s\le\tau_n\}}\varphi_s(X,z)[\mu(ds,ds)-dsK(dz)]
\\
\\
A_{t\wedge\tau_n}=\frac{1}{2}\int_0^tI_{\{s\le\tau_n\}}
\sigma^2_s(X)ds
\\
\hskip .5in+
\int_0^T\int_\mathbb{R}I_{\{s\le\tau_n\}}\big\{\varphi_s(X,z)-\log\big[1+
\varphi_s(X,z)\big]\big\}\mu(ds,dz).
\end{array}
\end{equation*}
The condition
$
\varphi_s(X,z)>-1
$
implies
$
\varphi_s(X,z)-\log\big[1+\varphi_s(X,z)\big] \ge 0.
$
This inequality, jointly with $\sigma^2_s(X) \ge 0 $, implies
$
A_{t\wedge\tau_n}\ge 0.
$
Therefore
$
\log\big(\mathfrak{z}_{_{T\wedge\tau_n}}\big)\le M_{T\wedge\tau_n}
$
and, so,
\begin{equation*}
\mathfrak{z}_{_{T\wedge\tau_n}}\log\big(\mathfrak{z}_{_{T\wedge\tau_n}}\big)\le
\mathfrak{z}_{_{T\wedge\tau_n}}M_{T\wedge\tau_n}.
\end{equation*}
Both processes $\mathfrak{z}_{_{t\wedge\tau_n}}$ and $M_{t\wedge\tau_n}$
are square integrable martingales heaving continuous and purely discontinuous components:
$
\mathfrak{z}_{_{t\wedge\tau_n}}=\mathfrak{z}^c_{_{t\wedge\tau_n}}+\mathfrak{z}^d_{_{t\wedge\tau_n}},
$
$
M_{t\wedge\tau_n}=M^c_{t\wedge\tau_n}+M^d_{t\wedge\tau_n},
$
where
\begin{equation*}
\begin{array}{ll}
 \mathfrak{z}^c_{_{T\wedge\tau_n}}=\int_0^TI_{\{s\le\tau_n\}}\mathfrak{z}_{s-}\sigma_s(X)dB_s,
\\
 M^c_{T\wedge\tau_n}=\int_0^TI_{\{s\le\tau_n\}}\sigma_s(X)dB_s,
 \\
 \\
 \mathfrak{z}^d_{_{T\wedge\tau_n}}=1+\int_0^T\int_\mathbb{R}I_{\{s\le\tau_n\}}\mathfrak{z}_{s-}
 \varphi_s(X,z)[\mu(ds,dz)-dsK(dz)],
 \\
 M^d_{T\wedge\tau_n}=\int_0^T\int_\mathbb{R}I_{\{s\le\tau_n\}}\varphi_s(X,z)[\mu(ds,ds)-dsK(dz)].
\end{array}
\end{equation*}
Hence $\E M_{T\wedge\tau_n}\mathfrak{z}_{_{T\wedge\tau_n}}=\widetilde{\E}^nM_{T\wedge\tau_n} $.
Also
\begin{gather*}
\E M^{c,n}_{T\wedge\tau_n}\mathfrak{z}_{_{T\wedge\tau_n}}=\E\int_0^TI_{\{s\le\tau_n\}}\mathfrak{z}_s
\sigma^2_s(X)ds=\E\mathfrak{z}_{_{T\wedge\tau_n}}\int_0^TI_{\{s\le\tau_n\}}
\sigma^2_s(X)ds
\\
=\widetilde{\E}^n\int_0^TI_{\{s\le\tau_n\}}
\sigma^2_s(X)ds
\end{gather*}
and
\begin{gather*}
\E M^{d,n}_{T\wedge\tau_n}\mathfrak{z}_{_{T\wedge\tau_n}}=\E\int_0^T\int_{\mathbb{R}}I_{\{s\le\tau_n\}}
\mathfrak{z}_{s-}\varphi^2_s(X,z)K(dz)ds
\\
=\widetilde{\E}^n\int_0^T\int_\mathbb{R}I_{\{s\le\tau_n\}}
\varphi^2_s(X)K(dz)ds.
\end{gather*}
So
\begin{gather*}
\widetilde{\E}^nM_{T\wedge\tau_n}
=\widetilde{\E}^n\int_0^TI_{\{s\le\tau_n\}}\bigg(\sigma^2_s(X)+\int_\mathbb{R}
\varphi^2_s(X,z)K(dz)\bigg)ds.
\end{gather*}
Now, \eqref{eq:B-} enables to finish the proof:
\begin{gather*}
\sup_n \E\mathfrak{z}_{_{T\wedge\tau_n}}\log\big(\mathfrak{z}_{_{T\wedge\tau_n}}\big)
 \le \texttt{r}
  \begin{cases}
   T+\sup\limits_{n}\widetilde{\E}^n\int_0^TX^2_{s\wedge\tau_n}ds  , & \text{for theorem \ref{theo-4.1.2}} \\ \\
   T+\sup\limits_{n}\widetilde{\E}^n\int_0^T\sup_{s'\le s}X^2_{s'\wedge\tau_n}ds   & \text{for theorem  \ref{theo-5.0.1}}
  \end{cases}
\end{gather*}
if conditions of the lemma fulfilled.
\end{proof}

\subsection{Proof of Theorem \ref{theo-4.1.2}}
\label{sec-4.2}
By Lemma \ref{lem-unint}, it suffices to verify that
$$
\sup_n\widetilde{\E}^n\int_0^TX^2_{s\wedge\tau_n}ds<\infty
$$
 with
  $\tau_n$ defined in \eqref{eq:taun} and $\widetilde{\E}^n$  the expectation
under $\Q^n$: $d\Q^n=\mathfrak{z}_{_{T\wedge\tau_n}}d\mathsf{P}$.
Thus we need to know how $X$ looks like under $\Q^n$. This is given by a well-known  result on semimartingales under a change of measure,
Theorem \ref{theo-A} (Appendix \ref{sec-A}). It states that
$(X_t,\Q^n)_{t\in[0,T]}$   is a semimartingale with decomposition
\begin{multline*}
X_{t\wedge\tau_n}=X_0+\int_0^tI_{\{s\le\tau_n\}}\Big[a_s(X_{s-})+b_s(X_{s-})\sigma_s(X_{s-})
\\
+\int_\mathbb{R}h_s(X_{s-},z)\varphi_s(X_{s-},z)K(dz)\Big]ds
+\widetilde{\mathcal{M}}^{c,n}_t+\widetilde{\mathcal{M}}^{d,n}_t,
\end{multline*}
with   continuous $\widetilde{\mathcal{M}}^{c,n}_t$ and purely discontinuous
$\widetilde{\mathcal{M}}^{d,n}_t$
square
integrable martingales having predictable quadratic variations:
\begin{equation*}
\begin{aligned}
\langle
\widetilde{\mathcal{M}}^{c,n}\rangle_t&=\int_0^tI_{\{s\le\tau_n\}}b^2_s,X_{s-})ds
\\
\langle \widetilde{\mathcal{M}}^{d,n}\rangle_t
&=\int_0^t\int_\mathbb{R}I_{\{s\le\tau_n\}}
h^2_s(X_{s-},z)[1+\varphi_s(X_{s-},z)]K(dz)ds.
\end{aligned}
\end{equation*}
By  It\^o's formula we obtain
\begin{multline*}
X^2_{t\wedge\tau_n} =X^2_0+\int_0^tI_{\{s\le\tau_n\}}2X_{s-}\Big[a_s(X_{s-})+b_s(X_{s-})\sigma_s(X_{s-})
\nonumber\\
\quad
+\int_\mathbb{R}h_s(X_{s-},z)\varphi_sX_{s-},z)K(dz)\Big]ds+\langle
\widetilde{\mathcal{M}}^{c,n}\rangle_t
+[\widetilde{\mathcal{M}}^{d,n},\widetilde{\mathcal{M}}^{d,n}]_t
\\
\quad
+\int_0^tI_{\{s\le\tau_n\}}2X_{s-}d\widetilde{\mathcal{M}}^{c,n}_s+\int_0^tI_{\{s\le\tau_n\}}
2X_{s-}d\widetilde{\mathcal{M}}^{d,n}_s,
\end{multline*}
where $[\mathcal{M}^{d,n},\mathcal{M}^{d,n}]_t$ is the quadratic variation of $\mathcal{M}^{d,n}_t$.
Therefore
\begin{align}\label{eq:ups1}
\widetilde{\E}^nX^2_{t\wedge\tau_n}&=\widetilde{\E}^nX^2_0+
\int_0^t\widetilde{\E}^n I_{\{s\le\tau_n\}}2X_{s-}\Big[a_s(X_{s-})
+b_s(X_{s-})\sigma_s(X_{s-})
\nonumber\\
&\quad +\int_\mathbb{R}h_s(X_{s-},z)\varphi_s(X_{s-},z)K(dz)\Big]ds+
\widetilde{\E}^n\langle \widetilde{\mathcal{M}}^{c,n}\rangle_t
+\widetilde{\E}^n\langle \widetilde{\mathcal{M}}^{d,n}\rangle_t,
\end{align}
where $[\mathcal{M}^{d,n},\mathcal{M}^{d,n}]_t$ is a quadratic variation of the martingale
$\mathcal{M}^{d,n}_t$.
In view of
\begin{equation*}
\begin{aligned}
\widetilde{\E}^n\langle
\widetilde{\mathcal{M}}^{c,n}\rangle_t&=\widetilde{\E}^n\int_0^tI_{\{s\le\tau_n\}}b^2_s(X_{s-})ds
\\
\widetilde{\E}^n\langle \widetilde{\mathcal{M}}^{d,n}\rangle_t
&=\widetilde{\E}^n\int_0^t\int_\mathbb{R}I_{\{s\le\tau_n\}}
h^2_s(X_{s-},z)[1+\varphi_s(X_{s-},z)]K(dz)ds
\end{aligned}
\end{equation*}
\eqref{eq:ups1} can be presented as
$$
\widetilde{\E}^nX^2_{t\wedge\tau_n}=\widetilde{\E}^nX^2_0+\widetilde{\E}^n\int_0^tI_{\{s\le\tau_n\}}
\mathfrak{L}_s(X_{s-})ds,
$$
with $\mathfrak{L}_s(X_{s-})$ defined in  \eqref{eq:magl+}. Thus, we obtain the Gronwall-Bellman
inequality:
$
\widetilde{\E}^nX^2_{t\wedge\tau_n}\le \texttt{r}\int_0^t\big[1+\widetilde{\E}^nX^2_{s\wedge\tau_n}ds\big]
$
which implies the desired estimate
$$
\sup_n \int_0^T\widetilde{\E}^nX^2_{s\wedge\tau_n}ds\le e^{\texttt{r}T}-1.
$$

\subsection{Proof of Theorem \ref{theo-5.0.1}}
\label{sec-6.3}
Let stopping time $\tau_n$ (see \eqref{eq:taun}) is adapted to Theorem  \ref{theo-5.0.1}.
Then, by Lemma \ref{lem-unint} it suffices to verify the uniform integrability of family of random variables $\mathfrak{z}_{_{T\wedge\tau_n}}, n\ge 1$:
\begin{equation*}
\sup_n\widetilde{\E}^n\int_0^T\sup_{s'\le s}X^2_{s'\wedge\tau_n}ds<\infty.
\end{equation*}
In view of
$\E\mathfrak{z}_{_{T\wedge\tau_n}}=1$, the probability measure
$\Q^n$ is well defined:
$d\Q^n=\mathfrak{z}_{_{T\wedge\tau_n}}d\mathsf{P}$.
By Theorem \ref{theo-A} the process $(X_{t\wedge\tau_n},\Q^n)_{t\in[0,T]}$ is the semimartingale:
\begin{multline}
X_{t\wedge\tau_n}=X_0+\int_0^tI_{\{s\le\tau_n\}}\Big[a_s(X)+b_s(X)\sigma_s(X)
\\
+\int_\mathbb{R}h_s(X,z)\varphi_s(X,z)K(dz)\Big]ds
+\widetilde{\mathcal{M}}^{c,n}_t+\widetilde{\mathcal{M}}^{d,n}_t,
\label{eq:newQy}
\end{multline}
where $\widetilde{\mathcal{M}}^{c,n}_t$ a $\widetilde{\mathcal{M}}^{d,n}_t$
are continuous and purely discontinuous square integrable martingales with
\begin{equation*}
\begin{aligned}
\langle
\widetilde{\mathcal{M}}^{c,n}\rangle_t&=\int_0^tI_{\{s\le\tau_n\}}b^2_s(X)ds
\\
\langle \widetilde{\mathcal{M}}^{d,n}\rangle_t
&=\int_0^t\int_\mathbb{R}I_{\{s\le\tau_n\}}
h^2_s(X,z)[1+\varphi_s(X,z)]K(dz)ds.
\end{aligned}
\end{equation*}
This semimartingale enables to obtain the following estimate:
\begin{gather*}
\widetilde{\E}^n\sup_{t'\le t}|X_{t'\wedge\tau_n}|^2\le 4\Bigg[\widetilde{\E}^nX^2_0+
\widetilde{\E}^n\sup_{t'\le t}\big|\mathcal{M}^{c,n}_{t'}\big|^2
+\widetilde{\E}^n\sup_{t'\le t}\big|\mathcal{M}^{d,n}_{t'}\big|^2
\nonumber \\
+\underbrace{\widetilde{\E}^n\bigg(\int_0^tI_{\{s\le\tau_n\}}\Big|a_s(X)+b_s(X)\sigma_s(X)
+\int_\mathbb{R}h_s(X,z)\varphi_s(X,z)K(dz)\Big|ds\bigg)^2}_{:=J_t}\Bigg].
\end{gather*}
To this end, we evaluate each term in the right hand side of aforementioned inequality.
\begin{equation*}
\begin{array}{ll}
 4\widetilde{\E}^nX^2_0\le \texttt{r} \quad \text{by the assumption of theorem};
 \\ \\
 4\widetilde{\E}^n\sup_{t'\le t}\big|\mathcal{M}^{c,n}_{t'}\big|^2
\le 16\widetilde{\E}^n\langle \mathcal{M}^{c,n}\rangle_t
=16\widetilde{\E}^n\int_0^tI_{\{s\le\tau_n\}}b^2_s(X)ds \
\\
\mbox{}\hskip .85in\text{the maximal Doob inequality };
\\ \\
4\widetilde{\E}^n\sup_{t'\le t}\big|\mathcal{M}^{d,n}_{t'}\big|^2\le
16\widetilde{\E}^n\langle \mathcal{M}^{d,n}\rangle_t
\\
\hskip .3in=16\widetilde{\E}^n\int_0^t\int_\mathbb{R}I_{\{s\le\tau_n\}}
h^2_s(X,z)[1+\varphi_s(X,z)]K(dz)ds
\\
\mbox{}\hskip .85in\text{ìàêñèìàëüíîå íåðàâåíñòâî Äóáà };
\\ \\
4J_t\le 4t\E\int_0^tI_{\{s\le\tau_n\}}\big|a_s(X)+b_s(X)\sigma_s(X)
+\int_\mathbb{R}h_s(X,z)\varphi_s(X,z)K(dz)\big|^2ds
\\ \\
\le 12t\E\int_0^tI_{\{s\le\tau_n\}}\big[a^2_s(X)+b^2_s(X)\sigma^2_s(X)
+\big(\int_\mathbb{R}h_s(X,z)\varphi_s(X,z)K(dz)\big)^2\big]ds
\\ \\
\le 12t\E\int_0^tI_{\{s\le\tau_n\}}\big[a^2_s(X)+b^2_s(X)\sigma^2_s(X)
\\
\hskip 2in+\int_\mathbb{R}h^2_s(X,z)K(dz)\int_\mathbb{R}\varphi_s(X,z)K(dz)\big]ds
\\
\mbox{}\hskip .85in\text{the Cauchy-Schwarz inequality }.
\end{array}
\end{equation*}
These upper bounds imply the following inequality Âñÿ ñîâîêóïíîñòü
$
\widetilde{\E}^n\sup_{t'\le t}|X_{t'\wedge\tau_n}|^2\le \texttt{r}+\int_0^t
\widetilde{\E}^n\sup_{t'\le t}I_{\{s\le\tau_n\}}
\mathfrak{L}_s(X)ds,
$
where the operator $\mathfrak{L}_s(X)$ is defined in
\eqref{eq:LLL}, that is,
$
\widetilde{\E}^n\mathfrak{L}_{s\wedge\tau_n}(X)\le \texttt{r}\big[1+\widetilde{\E}^n\sup_{s'\le s}|X_{s'\wedge\tau_n}|^2\big].
$

Thus, we arrive at the Gronwall-Bellman inequality:
$$
\widetilde{\E}^n\sup_{t'\le t}|X_{t'\wedge\tau_n}|^2\le \texttt{r}\bigg[1+
\int_0^t\widetilde{\E}^n\sup_{s'\le s}|X_{s'\wedge\tau_n}|^2ds\bigg].
$$
So, the desired estimate
$
\sup_n\int_0^T\widetilde{\E}^n\sup_{s'\le s}|X_{s'\wedge\tau_n}|^2ds\le e^{\texttt{r}T}-1
$
holds true.

\section{\bf  Example when $X_t$ is a possibly  explosive Markov process }
\label{sec-5}

In this Section we consider a concrete model  (for a different example see
Andersen and Piterbarg \cite{AP}).

Let

\begin{equation*}
\begin{array}{ll}
X_t=X_0+\int_0^{t}a_s(X_{s-})ds
+\int_0^{t}b_s(X_{s-})dB_s
\\
\hskip 1.5in+\int_0^{t}\int_{\mathbb{R}}h_s(X_{s-},z)[\mu(ds,dz)-dsK(dz)]
\\ \\
\mathfrak{z}_t
=1+\int_0^t\mathfrak{z}_{s-}\Big[\sigma_s(X_{s-})dB_s
+\int_0^t\int_\mathbb{R}\varphi_s(X_{s-},z)[\mu(ds,dz)-dsK(dz)].
\end{array}
\end{equation*}
This example contains purely discontinuous martingales and so, it generalizes a model of Mijitovic-Urusov (see \cite{MiUr}).

Let the following conditions hold.
\begin{itemize}
  \item[\bf 1.] $a_s(x_{s-})\ge |x_{s-}|^\alpha, \alpha>3$
  \\
  \item [\bf 2.] $b^2_s(x_{s-})\le
  \begin{cases}
    \texttt{r} , & \alpha\in (3,4) \\
      \texttt{r}[1+x^2_{s-}] & \alpha>4
  \end{cases}
$
 \\
\item[\bf 3.] $\ h_s(x_{s-},z)\equiv z$
\\
\item[\bf 4.]$\sigma^2_s(x_{s-})\le \texttt{r}[1+x^2_{s-}]$
  \\
  \item[\bf 5.]$\ \varphi_s(x_{s-},z)\equiv |z|$
\\
\item[\bf 6.]$\int_\mathbb{R}[z^2+|z|^3]K(dz)<\infty$
 \\
  \item [\bf 7.]$0<X_0\le \texttt{r}$.
\end{itemize}
 \medskip
We assume   condition
  \begin{equation*}
\sigma^2_s(x_{s-})+\int_\mathbb{R}\varphi^2_s(x_{s-},z)K(dz)\le
 \big[1+x^2_{s-}\big]. \ s\in[0,T]
\end{equation*}
   However, conditions related to operators $L_s(x_{s-})$
and $\mathfrak{L}_s(x_{s-})$ fail.
Therefore an explosion of the process $X_t$ towards to $+\infty$ is possible. In a case of explsion
$\tau_n=\inf\{t:(\mathfrak{z}_t\vee X^2_t)\ge n\big\}$, $n\ge 1$ obeys a limit $\tau\le \infty$
as $n\to\infty$ with  $\mathsf{P}(\tau<\infty)>0$. So, in the case of explosion only
$
\E\mathfrak{z}_{_{T\wedge\tau}}=1
$
might be expected. The chance of explosion does not contradict the statement of Lemma
\ref{lem-unint}
$$
\sup_n\widetilde{\E}^n\int_0^T
X^2_{s\wedge\tau_n}ds<\infty.
$$
However, it is hard to check.
Therefore  we shall use
\begin{equation}\label{eq:+2-?}
\sup_n\widetilde{\E}^n\int_0^T
\frac{|X_{s\wedge\tau_n}|^\alpha}{1+|X_{s\wedge\tau_n}|^{\alpha-2}} ds<\infty
\end{equation}
instead
since for $\alpha>3$
$$
\sup_n\widetilde{\E}^n\int_0^T\frac{|X_{s\wedge\tau_n}|^\alpha}{1+|X_{s\wedge\tau_n}
|^{\alpha-2}} ds<\infty\Rightarrow
\sup_n\widetilde{\E}^n\int_0^TX^2_{s\wedge\tau_n}ds<\infty.
$$
We choose the function
\begin{gather*}
g_\alpha(u)=\int_0^u\frac{1}{1+|y|^{\alpha-2}}dy,\;\;u\in\mathbb{R}
\end{gather*}
with derivatives
\begin{gather*}
g'_\alpha (u)=\frac{1}{1+|u|^{\alpha-2}},
\quad\text{è}\quad
 g''_\alpha (u)=
-\frac{(\alpha-2)|u|^{\alpha-3}\sign(u)}{[1+|u|^{\alpha-2}]^2}.
\end{gather*}
Moreover  $a_s(x_{s-})\ge |x_{s-}|^\alpha$
è $\big|g''_\alpha(x_{s-})\big|\le \texttt{r}$. Áîëåå òîãî
\begin{equation}\label{eq:g''}
g'_\alpha (x_{s-})a_s(x_{s-})\ge \frac{|x_{s-}|^\alpha}{1+|x_{s-}|^{\alpha-2}},
\end{equation}
where the right hand side of this inequality is the integrand in
\eqref{eq:+2-?}. Since the process
$X_{t\wedge\tau_n}$, relative to the new measure $\Q^n$, does not explode  for any fixed  $n$, then by Theorem \ref{theo-A}, we have

\begin{equation*}
\begin{array}{ll}
X_{t\wedge\tau_n}=X_0+\int_0^tI_{\{s\le\tau_n\}}\Big[a_s(X_{s-})+b_s(X_{s-})\sigma_s(X_{s-})
\\
\hskip 2in +\int_\mathbb{R}z|z|K(dz)\Big]ds
+\widetilde{\mathcal{M}}^{c,n}_t+\widetilde{\mathcal{M}}^{d,n}_t,
\end{array}
\end{equation*}
where   $\widetilde{\mathcal{M}}^{c,n}_t$ and $\widetilde{\mathcal{M}}^{d,n}_t$
are continuous and purely discontinuous square integrable martingales with the predictable quadratic variations \begin{equation*}
\begin{array}{ll}
\langle
\widetilde{\mathcal{M}}^{c,n}\rangle_t=\int_0^tI_{\{s\le\tau_n\}}b^2_s(X_{s-})ds
\\
\langle \widetilde{\mathcal{M}}^{d,n}\rangle_t
=\int_0^t\int_\mathbb{R}I_{\{s\le\tau_n\}}
[z^2+|z|^3]K(dz)ds.
\end{array}
\end{equation*}
Now, by applying the It\^o formula to $g_\alpha (X_{t\wedge\tau_n})$ we obtain
\begin{align*}
&g_\alpha (X_{T\wedge\tau_n})=g_\alpha (X_0)
\\
&+\int_0^TI_{\{s\le\tau_n\}}g'_\alpha (X_{s-})\Big[a_s(X_{s-})
+\sigma_s(X_{s-})b_s(X_{s})
+\int_\mathbb{R}
|z|zK(dz)\Big]ds
\\
&+\int_0^tI_{\{s\le\tau_n\}}g'_\alpha (X_{s-})d\widetilde{\mathcal{M}}^{c,n}_s
+\int_0^tI_{\{s\le\tau_n\}}g'_\alpha (X_{s-})d\widetilde{\mathcal{M}}^{d,n}_s
\\
&+\frac{1}{2}\int_0^tI_{\{s\le\tau_n\}}g''_\alpha(X_{s-})d\langle \widetilde{\mathcal{M}}^{c,n}\rangle_s
\\
&
+\int_0^t\int_\mathbb{R}I_{\{s\le\tau_n\}}\Big[g_\alpha(X_{s-}+z)-g_\alpha(X_{s-})-g'_\alpha(X_{s-})z\Big]
\mu(ds,dz).
\end{align*}
Hence
 \begin{align*}
&\widetilde{\E}^ng_\alpha (X_{T\wedge\tau_n})=\widetilde{\E}^ng_\alpha (X_0)
\\
&+\widetilde{\E}^n\int_0^TI_{\{s\le\tau_n\}} g'_\alpha (X_{s-})a_s(X_{s})
ds
\\
&+\widetilde{\E}^n\int_0^TI_{\{s\le\tau_n\}}g'_\alpha(X_{s})\bigg[
\sigma_s(X_{s-})b_s(X_{s-})
+\int_\mathbb{R}
|z|zK(dz)\bigg]ds
\\
&+\frac{1}{2}\widetilde{\E}^n\int_0^TI_{\{s\le\tau_n\}}g''_\alpha (X_{s-})
b^2_s(X_s)ds
\\
&
+\widetilde{\E}^n
\int_0^T\int_\mathbb{R}I_{\{s\le\tau_n\}}\Big[g (X_{s-}+z)-g (X_{s-})-g' (X_{s-})z\Big]
\nu^n(ds,dz),
\end{align*}
where
$
\nu^n(ds,dz)=I_{\{s\le\tau_n\}}[1+|z|]dsK(dz)
$
is $\widetilde{\Q}^n$-compensator of integer-value random measure $I_{\{s\le \tau_n\}}\mu(ds,dz)$ (see Theorem \ref{theo-A}). So, in view of \eqref{eq:g''}, one can derive  the next upper bound:

 \begin{equation*}
\begin{aligned}
&\widetilde{\E}^n\int_0^TI_{\{s\le\tau_n\}}\frac{|X_{s-}|^\alpha}{1+|X_{s-}|^{\alpha-2}} ds\le \widetilde{\E}^n\int_0^TI_{\{s\le\tau_n\}}g'_\alpha (X_{s-})a_s(X_{s-})ds
\\
&\le
\widetilde{\E}^ng_\alpha (X_{T\wedge\tau_n})
+\widetilde{\E}^n\int_0^TI_{\{s\le\tau_n\}}
g'_\alpha(X_{s-})|\sigma_s(X)b_s(X-)|ds
\\
&+\widetilde{\E}^n\int_0^T\int_\mathbb{R}I_{\{s\le\tau_n\}}g' (X_{s-})
z^2K(dz)ds
\\
& \ +\frac{1}{2}\widetilde{\E}^n\int_0^TI_{\{s\le\tau_n\}}|g''_\alpha(X_{s-})|
b^2_s(X_{s-})ds
\\
&\ +\widetilde{\E}^n\int_0^T\int_\mathbb{R}I_{\{s\le\tau_n\}}
\big|g_\alpha(X_{s-}+z)-g_\alpha(X_{s-})-g'_\alpha(X_{s-})z\big|[1+|z|]K(dz)ds.
\end{aligned}
\end{equation*}
The absolute value of each    summand above is bounded  by a constant independent of   $n$:
\begin{itemize}
  \item $g_\alpha (X_{T\wedge\tau_n})\le \text{const.}$, $\alpha>3$
  \\
  \item
  $
g'_\alpha (X_{(t\wedge\tau_n)-}))\big|\sigma_t(X_{(t\wedge\tau_n)-})b_t(X_{(t\wedge\tau_n)-})\big|
\\ \\
\mbox{}\hskip 1in \le
\left\{
  \begin{array}{ll}
  \frac{\sqrt{1+X^2_{(t\wedge\tau_n)-})}}{1+|X^{\alpha-2}_{(t\wedge\tau_n)-}|}\le  \text{const.}  ,
  & \alpha\in(3,4) \\
   \frac{1+X^2_{(t\wedge\tau_n)-})}{1+|X^2_{(t\wedge\tau_n)-}|^{\alpha-2}}\le \text{const.} , & \alpha\ge 4
  \end{array}
\right.
$
 \\ \\
  \item $
\big|\int_\mathbb{R}g'_\alpha (X_{(t\wedge\tau_n)-})
\int_\mathbb{R}z^2K(dz)\big|\le \text{const.}
$
\\ \\
\item
$
\int_0^T|g''_\alpha (X_{(s\wedge\tau_n)-})|
b^2_s(X_{(s\wedge\tau_n)-})ds
\\ \\
\mbox{}\hskip .5in \le\int_0^T
\frac{(\alpha-2)|X_{(s\wedge\tau_n)-}|^{\alpha-3}}{[1+|X_{(s\wedge\tau_n)-}|^{\alpha-2}]^2}
[1+X^2_{(s\wedge\tau_n)-}]\int_\mathbb{R}z^2K(dz)ds\le \text{const.}
$
\\ \\
\item
$
  \int_0^T\int_\mathbb{R}
\big|g_\alpha (X_{s\wedge\tau_n}+z)-g_\alpha (X_{s\wedge\tau_n})-g'_\alpha(X_{(s\wedge\tau_n)-})z\big|\big[1+|z|\big]K(dz)ds
\\ \\
\mbox{}\hskip .5in\le \texttt{r}T\int_\mathbb{R}[z^2+|z|^3]K(dz).
$
\end{itemize}

Thus, \eqref{eq:+2-?} holds.

\section{\bf Extensions}
\label{sec-8+}

\subsection{Under $\pmb{\E\mathfrak{z}_{_T}<1}$ the Bene\^s condition may fail}
Let $X_t$ be Bessel process, $X_t=1+\int_0^t\frac{ds}{X_s}+B_t$.
By the It\^o
$$
\log(X_t)=-\int_0^t\frac{1}{2X^2_s}ds+\int_0^t\frac{dB_s}{X_s}.
$$
Consequently one may choose
$M_t=-\int_0^t\frac{dB_s}{X_{s}}$
and create Doleans-Dade process
$$
\mathfrak{z}_t=1-\int_0^t\mathfrak{z}_s\frac{dB_s}{X_s}.
$$
Assume there exists a positive time value $T$ such that
$\E \mathfrak{z}_{_T}=1$. Then also there exists a probability measure  $\Q\ll\mathsf{P}$ with density
$\frac{d\Q}{d\mathsf{P}}=\mathfrak{z}_{_T}$. So the Girsanov theorem enables present the process
$X_t$ (w.r.t. $\Q õ$) is:
$
X_t=1+\widetilde{B}_t
$
with $\Q$-Brownian motion  $\widetilde{B}_t$. Hence $X_t$ is Gaussian process ãàóññîâñêèé ïðîöåññ which cannot to be positive on $[0,T]$, a.s. So, $\E\mathfrak{z}_{_T}\ne 1$, i.e., $\E\mathfrak{z}_{_T}< 1$.
On the other hand, $\sigma^2(y)=\frac{1}{y^2}\not\le \texttt{r}\big[1+y^2]$.

\subsection{ $\pmb{X}$ - Vector diffusion  äèôôóçèÿ}
In view of diffusion setting we shall use notations:
Ïîñêîëüêó ìû èìååì äåëî ñ äèôôóçèîííûì ïðîöåññîì   îáîçíà÷åíèÿ
$x_s$, $X_s$ instead of $x_{s-}$, $X_{s-}$.
Let
\begin{equation*}
X_t=X_0+\int_0^ta_s(X_{s})ds+\int_0^tb_s(X_{s})dB_s,
\end{equation*}
where $a_s(x_s)$ is matrix-valued function, $b_s(x_s)$ are vector-valued function, $B_t$
is Brownian vector (column) motion process
with independent component (standard Brownian motions).
The Bene\^s condition is naturally compatible with vector case.
The norm in $\mathbb{L}^2$ and the inner product denote by  $\|\cdot\|$ and  $\lef\cdot\rig$  respectively,
the transposition symbol denote by $^*$ . Let $\sigma_s(x_s)$ be vector function (row),  such that,
 the process
$$
\mathfrak{z}_t=1+\int_0^t\mathfrak{z}_s\lef \sigma_s(x_s),dB_s\rig
$$
is well defined.

\begin{theorem}\label{theo-8.1.1}
Let $ \|X\|^2_0\le \texttt{r}$ è $(x_s)_{s\in \mathbb{R}_+}\in \mathbb{C}$.
 Let for any $(x_s)_{s\in[0,T]}\in \mathbb{C}$ the following property hold:

{\rm 1)} $\|\sigma_s(x_s)\|^2$

\vskip .05in
{\rm 2)} $L_s(x_s)=2\lef x_{s},a_s(x)\rig+\trace[b^{*}_s(x_s)b_s(x_s)]
$

\vskip .05in
{\rm 3)} $\mathfrak{L}_s(x_s)=2\lef x_{s},[a_s(x_s)+b^{*}_s(x_s)\sigma_s(x_s)]\rig+
\trace[b^{*}_s(x_s)b_s(x_s)].
$

If for any $s\le T$
$$
\|\sigma_s(x_s)\|^2+L_s(x_s)+\mathfrak{L}_s(x_s)\le \texttt{r}[1+\|x_s\|^2],
$$
then $\E\mathfrak{z}_{_T}=1, \ \forall \ T>0$.
 \end{theorem}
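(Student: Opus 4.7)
The plan is to follow the template already established by the proofs of Theorem \ref{theo-4.1.2} and Theorem \ref{theo-5.0.1}, adapting the scalar arguments to the vector-valued setting. The continuous (pure diffusion) structure simplifies matters because there are no jump terms to handle, but all inner products and traces must be kept straight.

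First I would introduce the localizing sequence
$
\tau_n = \inf\{t : (\mathfrak{z}_t \vee \|X_t\|^2) \ge n\}.
$
Using the stopped equation
$$
\mathfrak{z}_{t\wedge\tau_n} = 1 + \int_0^t I_{\{s\le\tau_n\}} \mathfrak{z}_{s-} \lef \sigma_s(X_s), dB_s\rig,
$$
the bound $\|\sigma_s(x_s)\|^2 \le \texttt{r}[1+\|x_s\|^2]$ together with the boundedness of $\mathfrak{z}_{s\wedge\tau_n}$ and $\|X_{s\wedge\tau_n}\|^2$ before the stopping time shows the integrand is square integrable, so $\mathfrak{z}_{t\wedge\tau_n}$ is a true square-integrable martingale and $\E\mathfrak{z}_{T\wedge\tau_n} = 1$. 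This permits defining $\Q^n \ll \P$ with $d\Q^n/d\P = \mathfrak{z}_{T\wedge\tau_n}$ and lets us invoke the de la Vall\'ee Poussin route through $x\log x$ exactly as in Lemma \ref{lem-unint}: the family $\{\mathfrak{z}_{T\wedge\tau_n}\}$ is uniformly integrable provided
$$
\sup_n \widetilde{\E}^n \int_0^T \|X_{s\wedge\tau_n}\|^2\, ds < \infty.
$$
Once that is established, $\E\mathfrak{z}_T = \lim_n \E\mathfrak{z}_{T\wedge\tau_n} = 1$.

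The core of the proof is therefore the above second-moment estimate. By the (vector) Girsanov theorem, under $\Q^n$ the stopped process admits the decomposition
$$
X_{t\wedge\tau_n} = X_0 + \int_0^t I_{\{s\le\tau_n\}}\bigl[a_s(X_s) + b_s^{*}(X_s)\sigma_s(X_s)\bigr]\,ds + \widetilde{\mathcal{M}}^{c,n}_t,
$$
where $\widetilde{\mathcal{M}}^{c,n}$ is a continuous square-integrable $\Q^n$-martingale with matrix quadratic covariation $\int_0^{\cdot} I_{\{s\le\tau_n\}} b_s^{*}(X_s) b_s(X_s)\,ds$. Applying the It\^o formula to $\|X_{t\wedge\tau_n}\|^2 = \lef X_{t\wedge\tau_n}, X_{t\wedge\tau_n}\rig$ and taking $\widetilde{\E}^n$ kills the martingale part and leaves
$$
\widetilde{\E}^n \|X_{t\wedge\tau_n}\|^2 = \widetilde{\E}^n\|X_0\|^2 + \widetilde{\E}^n \int_0^t I_{\{s\le\tau_n\}} \mathfrak{L}_s(X_s)\,ds,
$$
where $\mathfrak{L}_s$ is precisely the operator introduced in the statement. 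The hypothesis $\mathfrak{L}_s(x_s)\le \texttt{r}[1+\|x_s\|^2]$ then yields the Gronwall--Bellman inequality
$$
\widetilde{\E}^n \|X_{t\wedge\tau_n}\|^2 \le \texttt{r}\Bigl[1 + \int_0^t \widetilde{\E}^n \|X_{s\wedge\tau_n}\|^2\,ds\Bigr],
$$
and integration in $t$ delivers the required uniform bound $\sup_n \widetilde{\E}^n \int_0^T \|X_{s\wedge\tau_n}\|^2\,ds \le e^{\texttt{r}T}-1$.

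The main technical obstacle — really a bookkeeping point — is to ensure the vector/matrix version of the change of drift is the correct one, namely that the Girsanov drift correction is $b_s^{*}(x_s)\sigma_s(x_s)$ so that $2\lef x_s, b_s^{*}(x_s)\sigma_s(x_s)\rig$ appears in $\mathfrak{L}_s$, and that the $\trace[b_s^{*}(x_s)b_s(x_s)]$ term coming from the It\^o correction on $\|X\|^2$ matches the definition of $\mathfrak{L}_s$. Once these are lined up the argument is a verbatim transcription of the scalar case. No other step requires new ideas: the role of $x_{s-}^2$ is played by $\|x_s\|^2$, and the absence of the jump terms removes the purely discontinuous contributions entirely.
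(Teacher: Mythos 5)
Your proposal is correct and is essentially the proof the paper intends: the paper explicitly omits the argument, stating only that it is ``similar to the proof in scalar setting,'' and your adaptation (localization via $\tau_n$, uniform integrability through the $x\log x$ criterion of Lemma \ref{lem-unint}, the Girsanov drift correction $b_s^{*}\sigma_s$, It\^o applied to $\|X\|^2$ producing $\mathfrak{L}_s$, and Gronwall--Bellman) is precisely that transcription of the proof of Theorem \ref{theo-4.1.2}.
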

The proof of this theorem is similar to the proof in scalar setting, so, it is omitted.

\begin{example}\label{ex:8.1}
{\rm Let
$
X_t=X_0+\int_0^ta(X_s)ds +\int_0^tB(X_s)dB_s
$
and
$$
\mathfrak{z}_t=1+\int_0^t\mathfrak{z}_s\big[\theta(X_s)dW_s+\sigma(X_s)dB_s\big],
$$
where $W_t$ is Wiener process independent Brownian motion $B_t$. In spit of $X_t$ is scalar process, it is convenient to verify $\E\mathfrak{z}_{_T}=1$ by applying Theorem \ref{theo-8.1.1}.
Write
\begin{align*}
\begin{pmatrix}
  X_t \\
  0 \\
\end{pmatrix}
&=\begin{pmatrix}
  X_0\\
  0 \\
\end{pmatrix}+\int_0^t\begin{pmatrix}
 a(X_s) \\
  0\\
 \end{pmatrix}ds+\int_0^t\begin{pmatrix}
 b(X_s) & 0 \\
 0 & 0 \\
 \end{pmatrix}
 \begin{pmatrix}
   dB_s \\
   dW_s \\
 \end{pmatrix}
 \\
 \mathfrak{z}_t&=1+\int_0^t\mathfrak{z}_s\big[\theta(X_s)dW_s+\sigma(X_s)dB_s\big].
\end{align*}

If

1) $X^2_0\le \texttt{r}$

2) $\sigma^2(x)+\theta^2(x)\le \texttt{r}[1+x^2]$

3) $2xa(x)+b^2(x)
\le \texttt{r}[1+x^2]$

 4) $2x[a(x)+b(x)\sigma(x)]+
b^2(x)
\le \texttt{r}[1+x^2]$,

then $\E\mathfrak{z}_{_T}=1, \ \forall \ T>0$.

\medskip
Assume, $\sigma^2(x)\equiv 0$. In this setting, $X_t$ and $W_t$ are independent processes, so that, a well known result holds:
$
\E\mathfrak{z}_{_T}=\E\exp\big(\int_0^T\theta(X_s)dW_s-\frac{1}{2}\int_0^T\theta^2(X_s)ds\big)=1.
$
}
\end{example}

\subsection{Nonlinear version of Hitsuda's type model}

Let
\begin{equation*}
X_t=\int_0^t\int_0^sl(s,u)dB_uds
+B_t\quad\text{and}\quad \mathfrak{z}_t=1+\int_0^t\mathfrak{z}_s\sigma(X_s)dB_s,
\end{equation*}
where
$l(t,s)$ is ÿäðî Volterra kernel
$
\int_0^t\int_0^sl^2(s,u)duds<\infty,
$
and where
$\sigma(x)$ is nonlinear function.
In the case $\sigma(x)=x$ the condition $\iint\limits_{[0,T]^2}l^2(s,u)duds<\infty$
provides $\E\mathfrak{z}_{_T}=1$ (see \cite{Hits}).
If $\sigma(x)$ in nonlinear function (possible discontinuous), then, combining Hitsuda's  approach with Bene\^s condition, it is possible to obtain
\begin{theorem}\label{theo-6}
If

{\rm 1)} $\sigma^2(x)\le \texttt{r}\big[1+x^2\big], \ s\in[0,T]$

\vskip .1in
{\rm 2)} $\iint\limits_{[0,T]^2}l^2(t,s)dtds<\infty$,

then $\E\mathfrak{z}_{_T}=1$.
\end{theorem}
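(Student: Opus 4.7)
\noindent
\textbf{Proof plan for Theorem \ref{theo-6}.} The strategy is to adapt the Bene\^s-type argument of Theorem \ref{theo-4.1.2} and Lemma \ref{lem-unint} to this Volterra-kernel setting; the novelty is how the kernel $l(s,u)$ interacts with the Girsanov drift under the candidate measure $\Q^n$.

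Set $\tau_n = \inf\{t : \mathfrak{z}_t \vee X_t^2 \ge n\}$. By growth condition (1) the integrand $\mathfrak{z}_{s-}\sigma(X_s) I_{\{s\le\tau_n\}}$ is bounded, hence $\mathfrak{z}_{t\wedge\tau_n}$ is a square-integrable martingale and $\E\mathfrak{z}_{T\wedge\tau_n}=1$. Define $\Q^n$ by $d\Q^n = \mathfrak{z}_{T\wedge\tau_n}\, d\mathsf{P}$ and write $\widetilde{\E}^n$ for its expectation. Because $\sigma$ depends only on the point value of $X$, the growth bound $\sigma^2(x) \le \texttt{r}[1+x^2]$ from condition (1) is of the Markov type appearing in Lemma \ref{lem-unint}, reducing the task to
\begin{equation*}
\sup_n \widetilde{\E}^n \int_0^T X^2_{s\wedge\tau_n}\, ds < \infty.
\end{equation*}

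By Girsanov, on $\{t\le\tau_n\}$ we have $B_t = \int_0^t \sigma(X_s)\, ds + \widetilde{B}^n_t$ with a $\Q^n$-Brownian motion $\widetilde{B}^n$ stopped at $\tau_n$. Substituting into the defining equation for $X$ produces, for $t\le\tau_n$, the four-term decomposition
\begin{equation*}
X_t = \int_0^t \int_0^s l(s,u)\sigma(X_u)\, du\, ds + \int_0^t \sigma(X_s)\, ds + \int_0^t \int_0^s l(s,u)\, d\widetilde{B}^n_u\, ds + \widetilde{B}^n_t.
\end{equation*}
Squaring via $(\sum_{i=1}^4 a_i)^2 \le 4 \sum_{i=1}^4 a_i^2$ and taking $\widetilde{\E}^n$-expectation, the two drift terms are controlled by Cauchy--Schwarz together with condition (1) and the kernel bound $\iint_{[0,T]^2} l^2 < \infty$ from condition (2), each contributing a term of the form $\texttt{r}\bigl[1 + \int_0^t \widetilde{\E}^n X^2_{u\wedge\tau_n}\, du\bigr]$. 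The pure Brownian piece contributes $\widetilde{\E}^n |\widetilde{B}^n_{t\wedge\tau_n}|^2 \le t$.

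The main obstacle is the stochastic double integral $\int_0^t\int_0^s l(s,u)\, d\widetilde{B}^n_u\, ds$, which is not itself a martingale in $t$ and does not directly fit the framework of Theorems \ref{theo-4.1.2} and \ref{theo-5.0.1}. I would handle it by applying Cauchy--Schwarz on the outer $ds$-integral and then It\^o's isometry fibre-wise in $s$:
\begin{equation*}
\widetilde{\E}^n\left(\int_0^t \int_0^s l(s,u)\, d\widetilde{B}^n_u\, ds\right)^{\!2} \le t \int_0^t \int_0^s l^2(s,u)\, du\, ds \le T \iint_{[0,T]^2} l^2(s,u)\, du\, ds,
\end{equation*}
a finite constant depending only on $T$ and the kernel, independent of $X$ and $n$. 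This is precisely where condition (2) enters. Collecting the four estimates yields the Gronwall--Bellman inequality $\widetilde{\E}^n X^2_{t\wedge\tau_n} \le \texttt{r}\bigl[1 + \int_0^t \widetilde{\E}^n X^2_{u\wedge\tau_n}\, du\bigr]$ with $\texttt{r}$ independent of $n$; integrating the resulting exponential bound over $[0,T]$ yields the required uniform estimate, and Lemma \ref{lem-unint} concludes that $\E\mathfrak{z}_{_T} = 1$.
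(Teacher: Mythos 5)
Your proposal is correct and follows essentially the same route as the paper: the same stopping times $\tau_n=\inf\{t:\mathfrak{z}_t\vee X_t^2\ge n\}$, reduction to $\sup_n\widetilde{\E}^n\int_0^T X^2_{s\wedge\tau_n}\,ds<\infty$ via Lemma \ref{lem-unint}, the identical four-term Girsanov decomposition of $X_{t\wedge\tau_n}$ under $\Q^n$, and a Gronwall--Bellman conclusion. The only (harmless, arguably cleaner) deviation is in bounding the double stochastic integral: you use Cauchy--Schwarz on the outer $ds$-integral followed by the It\^o isometry fibre-wise in $s$, whereas the paper passes through $\sup_{s\le t}\bigl|\int_0^s l(s,u)\,d\widetilde{B}^n_u\bigr|$ and a Doob maximal inequality; both yield a constant controlled by $\iint_{[0,T]^2}l^2$.
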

\begin{proof}
Formally, this model is not compatible with conditions of Theorem  \ref{theo-4.1.2} and Theorem \ref{theo-5.0.1}.
Nevertheless the choice of $\tau_n=\inf\big\{t:\big(\mathfrak{z}_{t}\vee X^2_t\big)\ge n\big\}$ and the
condition
\begin{equation}\label{ux}
\sup_n\widetilde{\E}^n\int_0^TX^2_{t\wedge\tau_n}dt<\infty,
\end{equation}
(see Lemma \ref{lem-unint}) provide  $\E\mathfrak{z}_{_T}=1$.
Condition 1) guaranties
$\E\mathfrak{z}_{_{T\wedge\tau_n}}=1$, that is, existence of probability measure $\Q^n$. Then, by
Girsanov theorem, a random process  $(B_{t\wedge\tau_n},\Q^n)$ can be presented as:
$$
B_{t\wedge\tau_n}=\int_0^tI_{\{s\le \tau_n\}}\sigma(X_s)ds+\widetilde{B}^n_t,
$$
with $\Q^n$ Brownian motion $\widetilde{B}^n_t$ stopped $\tau_n$.
Now, a random process $(X_{t\wedge\tau_n},\Q^n)_{t\in[0,T]}$ is semimartingale:
\begin{multline}\label{eq:neq}
X_{t\wedge\tau_n}=\int_0^{t}I_{\{s\wedge\tau_n\}}\int_0^{s}l(s,u)d\widetilde{B}^n_uds
\\
+\int_0^{t}I_{\{s\wedge\tau_n\}}\int_0^{s}l(s,u)\sigma(X_u)duds
\\
+ \int_0^tI_{\{s\le \tau_n\}}\sigma_s(X)ds+\widetilde{B}^n_t.
\end{multline}
This semimartingale will be applied in the proof of \eqref{ux}.
First of all we estimate $\widetilde{\E}^nX^2_{t\wedge\tau_n}$. In order to do that we evaluate
the expectation ($\widetilde{\E}^n$) of the each term square in the right hand side in
\eqref{eq:neq}.
Applying the Cauchy-Schwarz inequality, and the maximal Doob inequality for square integrable martingale,  and condition 1) we obtain,

\underline{First term:}
\begin{align*}
\ &\widetilde{\E}^n\bigg(\int_0^{t'}I_{\{s\le\tau_n\}}\int_0^{s}l(s,u)d\widetilde{B}^n_u
  ds\bigg)^2\le \widetilde{\E}^n\bigg(\int_0^{t}\sup_{s\le t}\bigg|\int_0^{s}l(s,u)d\widetilde{B}^n_u\bigg|
 ds \bigg)^2
 \\
 &\le t \widetilde{\E}^n\bigg(\int_0^{t}\sup_{s\le t}\bigg|\int_0^{s}l(s,u)d\widetilde{B}^n_u\bigg|^2ds\bigg)\le 4t^2\iint\limits_{[0,T]^2}l^2(s,u)dsds.
\end{align*}

\mbox{} \underline{Second term:}
\begin{multline*}
\widetilde{\E}^n\bigg(\int_0^{t}I_{\{s\le\tau_n\}}\int_0^{s}l(s,u)\sigma(X_u)duds\bigg)^2
\\
\hskip -1in\le t\int_0^t\widetilde{\E}^nI_{\{s\le\tau_n\}}\big|\int_0^{s}l(s,u)\sigma(X_u)du\big|^2ds
\\
\le t\int_0^t\int_0^{s}l^2(s,u)du\widetilde{\E}^n\int_0^{s\wedge\tau_n}\sigma^2(X_v)dvds
\\
\le t\int_0^t\int_0^{s}l^2(s,u)du\int_0^{s}\texttt{r}\big[1+\widetilde{\E}^n X^2_{v\wedge\tau_n}\big] dvds.
\end{multline*}

\mbox{} \underline{Third term:}
$
\widetilde{\E}^n\big(\int_0^tI_{\{s\le \tau_n\}}\sigma_s(X)ds\big)^2
\le t\int_0^{t}\texttt{r}\big[1+\widetilde{\E}^n X^2_{v\wedge\tau_n}\big] dvds.
$

\mbox{} \underline{Fourth term:}
$\widetilde{\E}^n\big(\widetilde{B}^n_t\big)^2\le \widetilde{\E}^n(t\wedge\tau_n)\le t$.

\medskip
\noindent
Obtained estimates enable arrive at the Gronwall-Bellman inequality:
$$
\widetilde{\E}^nX^2_{t\wedge\tau_n}
\le \texttt{r}\iint\limits_{[0,T]^2}l^2(t,s)dtds\bigg[1+\int_0^t\widetilde{\E}^n
X^2_{s\wedge\tau_n}ds\bigg]
$$
and, jointly with condition 2) of the theorem, to verify a validity  \eqref{ux}.
\end{proof}

\appendix
\section{\bf Generalized Girsanov theorem}\label{sec-A}

Let  $X$ solves equation \eqref{eq:sde} while stopping time $\tau_n$ is defined in  \eqref{eq:taun}.
Then
\begin{multline*}
X_{t\wedge\tau_n}=X_0+\int_0^tI_{\{s\le\tau_n\}}a_s(X)ds
\\
+\underbrace{\int_0^tI_{\{s\le\tau_n\}}b_s(X)dB_s}_{=\mathcal{M}_t^{c,n}}
+\underbrace{\int_0^t\int_\mathbb{R}I_{\{s\le\tau_n\}}h_s(X,z)[\mu(ds,dz)-dsK(dz)]}
_{=\mathcal{M}^{d,n}_t}.
\end{multline*}
Let the random variable  $\mathfrak{z}_{_{T\wedge\tau_n}}$ is defined in
\eqref{eq:34u}. Recall that $\E\mathfrak{z}_{_{T\wedge\tau_n}}=1$ and set a probability measure
$\Q^n$ having a density $\frac {d\Q^n}{d\mathsf{P}}=\mathfrak{z}_{_{T\wedge\tau_n}}$.

\begin{theorem}\label{theo-A}

{\rm (1)}
$
\nu^n(ds,dz)=I_{\{s\le\tau_n\}}[1+\varphi(s,X_{s-},z)]dsK(dz)
$
is $\Q^n$ - compensator of the integer-valued random measure  $I_{\{s\le\tau_n\}}\mu(ds,dz)$.

{\rm (2)} $(X_{t\wedge\tau_n},\mathcal{F}_t,\widetilde \Q^n)_{t\in[0,T]}$ is a semimartingale with decomposition{\rm :}
\begin{multline*}
X_{t\wedge\tau_n}=X_0+\int_0^tI_{\{s\le\tau_n\}}\Big[a_s(X)+b_s(X)\sigma_s(X)
\nonumber\\
+\int_\mathbb{R}h_s(X,z)\varphi_s(X,z)K(dz)\Big]ds
+\widetilde{\mathcal{M}}^{c,n}_t+\widetilde{\mathcal{M}}^{d,n}_t
\end{multline*}
in which $(\widetilde{\mathcal{M}}^{c,n}_t, \widetilde{\mathcal{M}}^{d,n}_t;\mathcal{F}_t,
\Q^n)_{t\in[0,T]}$ are continuous and purely discontinuous square integrable martingales.
Their predictable quadratic variations is defined below{\rm :}
\begin{equation}\label{eq:bubu}
\begin{aligned}
\langle
\widetilde{\mathcal{M}}^{c,n}\rangle_t&=\int_0^tI_{\{s\le\tau_n\}}b^2_s(X)ds
\\
\langle \widetilde{\mathcal{M}}^{d,n}\rangle_t
&=\int_0^t\int_\mathbb{R}I_{\{s\le\tau_n\}}
h^2_s(X,z)[1+\varphi_s(X,z)]K(dz)ds
\end{aligned}
\end{equation}
\end{theorem}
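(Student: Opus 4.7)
The plan is to reduce everything to the classical Bayes/Girsanov criterion: a process $N$ is a $\Q^n$-local martingale on $[0,T]$ if and only if $\mathfrak{z}_{\cdot\wedge\tau_n}\cdot N$ is a $\mathsf{P}$-local martingale, and an analogous statement characterizes the compensator of an integer-valued random measure. The key simplification provided by the stopping time $\tau_n$ in \eqref{eq:taun} is that $\mathfrak{z}_{t\wedge\tau_n}\le n$, so $\Q^n$ is a bona fide probability measure and all stochastic integrals below are genuine square-integrable martingales; boundedness also removes all integrability concerns in the product-rule computations.

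For part (1), I would pick a bounded predictable test function $W=W(s,z)$ and set
\[
N^W_t=\int_0^t\!\int_\mathbb{R} I_{\{s\le\tau_n\}}W(s,z)\big[\mu(ds,dz)-\nu^n(ds,dz)\big]
\]
with $\nu^n=I_{\{s\le\tau_n\}}[1+\varphi_s(X,z)]\,dsK(dz)$. The goal is to show $\mathfrak{z}_{t\wedge\tau_n}N^W_t$ is a $\mathsf{P}$-local martingale. Applying the integration-by-parts formula, using $d\mathfrak{z}_{t\wedge\tau_n}=\mathfrak{z}_{(t\wedge\tau_n)-}I_{\{t\le\tau_n\}}(dM^c_t+dM^d_t)$ and the fact that $N^W$ is purely discontinuous, the continuous covariation vanishes and the jump covariation equals
\[
\sum_{s\le t\wedge\tau_n}\triangle \mathfrak{z}_s\,\triangle N^W_s
=\int_0^t\!\int_\mathbb{R} I_{\{s\le\tau_n\}}\mathfrak{z}_{s-}\varphi_s(X,z)W(s,z)\,\mu(ds,dz).
\]
Taking the $\mathsf{P}$-compensator $dsK(dz)$ of $\mu$ and comparing with the drift produced by $-\nu^n$ against $\mathfrak{z}_{s-}$, the terms cancel exactly, leaving a sum of $\mathsf{P}$-local martingales. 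This identifies $\nu^n$ as the $\Q^n$-compensator.

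For part (2), the continuous martingale part is straightforward: the classical Girsanov formula for continuous semimartingales shifts $M^c=\int b_s(X)\,dB_s$ by $-\langle M^c,\text{log}\,\mathfrak{z}\rangle^c=\int b_s(X)\sigma_s(X)\,ds$, and predictable quadratic variations of continuous processes are invariant under an equivalent change of measure, giving $\widetilde{\mathcal{M}}^{c,n}_t$ and the first line of \eqref{eq:bubu}. For the discontinuous part, I would write
\[
\widetilde{\mathcal{M}}^{d,n}_t=\int_0^t\!\int_\mathbb{R} I_{\{s\le\tau_n\}}h_s(X,z)\big[\mu(ds,dz)-\nu^n(ds,dz)\big],
\]
which is a $\Q^n$-local martingale by part (1) applied to $W(s,z)=h_s(X,z)$. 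Rearranging the difference between the $\mathsf{P}$- and $\Q^n$-compensators yields the extra drift $\int_0^t I_{\{s\le\tau_n\}}\int_\mathbb{R}h_s(X,z)\varphi_s(X,z)K(dz)\,ds$ in the semimartingale decomposition. The predictable quadratic variation $\langle\widetilde{\mathcal{M}}^{d,n}\rangle_t$ is then computed directly from the $\Q^n$-compensator as $\int_0^t\!\int_\mathbb{R} I_{\{s\le\tau_n\}}h_s^2(X,z)\,\nu^n(ds,dz)$, which gives the second formula in \eqref{eq:bubu}.

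The main technical obstacle is the bookkeeping for the purely discontinuous component in part (1): one must correctly compute $[\mathfrak{z},N^W]$ and verify that, modulo a $\mathsf{P}$-local martingale, the quantity $\mathfrak{z}_{t\wedge\tau_n}N^W_t$ has zero drift against the $\mathsf{P}$-compensator $dsK(dz)$. All other steps are essentially formal once boundedness from the stopping time is invoked, and the square-integrability of the resulting martingales follows from the boundedness of $\mathfrak{z}_{\cdot\wedge\tau_n}$ together with the local integrability of $b^2_s(X)$ and $h^2_s(X,z)$ built into the standing assumptions on the coefficients.
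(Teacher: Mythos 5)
Your proposal is correct, but it reaches the conclusion by a somewhat different route than the paper. For part (1) the paper does not use integration by parts at all: it tests the candidate compensator directly, taking an arbitrary stopping time $\theta_n\le\tau_n$ and a Borel set $\varGamma$, unwinding $\widetilde{\E}^n[\cdot]=\E[\mathfrak{z}_{_{T\wedge\theta_n}}(\cdot)]$, compensating $\mu$ under $\mathsf{P}$ against the integrand $\mathfrak{z}_{(s\wedge\theta_n)-}I_{\{z\in\varGamma\}}$ (whose drift picks up exactly the factor $1+\varphi_s(X,z)$ from the jump structure of $\mathfrak{z}$), and re-wrapping; arbitrariness of $\theta_n$ and $\varGamma$ then identifies $\nu^n$. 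Your version establishes the equivalent statement that $\mathfrak{z}_{\cdot\wedge\tau_n}N^W$ is a $\mathsf{P}$-local martingale for bounded predictable $W$ via the product rule, with the cancellation between $[\mathfrak{z},N^W]$ and the drift $-\mathfrak{z}_{-}W\varphi\,dsK(dz)$ playing the role that the factor $1+\varphi$ plays in the paper; the two computations are the same identity read in different orders. For part (2) the paper cites the general Girsanov-type theorem of Liptser--Shiryaev, $\widetilde{\mathcal{M}}=\mathcal{M}-\int\mathfrak{z}^{-1}_{(s\wedge\tau_n)-}\,d\langle\mathfrak{z}_{\cdot\wedge\tau_n},\mathcal{M}\rangle_s$, computing the two predictable covariations explicitly, whereas you derive the jump drift from part (1) applied to $W=h_s(X,z)$ and invoke the classical continuous Girsanov shift separately; your route is more self-contained, the paper's is shorter given the reference. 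One small imprecision: $\mathfrak{z}_{t\wedge\tau_n}\le n$ is not quite right, since the jump at $\tau_n$ may overshoot the level $n$; only the left limits $\mathfrak{z}_{(t\wedge\tau_n)-}$ and $X^2_{(t\wedge\tau_n)-}$ are bounded (as the paper states), which is what you actually need for the stochastic integrals to be square-integrable martingales and for $\E\mathfrak{z}_{_{T\wedge\tau_n}}=1$. This does not affect the validity of the argument.
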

\begin{proof} (1)
  Let $\theta_n$ be stopping time, $\theta_n\le \tau_n$. Let $\varGamma\in \mathbb{R}\setminus \{0\}$ is a  measurable  set. Then
$$
\widetilde{\E}^n\int_0^T\int_\mathbb{R}I_{\{s\le\theta_n\}}I_{\{z\in\varGamma\}}\mu(ds,dz)
=\widetilde{\E}^n\int_0^T\int_\mathbb{R}I_{\{s\le\theta_n\}}I_{\{z\in\varGamma\}}\nu^n(ds,dz).
$$
On the other hand
\begin{align*}
&\widetilde{\E}^n\int_0^T\int_\mathbb{R}I_{\{s\le\theta_n\}}I_{\{z\in\varGamma\}}\mu(ds,dz)
=\E\mathfrak{z}_{_{T\wedge\theta_n}}\int_0^T\int_\mathbb{R}I_{\{s\le\theta_n\}}I_{\{z\in\varGamma\}}
\mu^n(ds,dz)
\\
&=\E\int_0^T\int_\mathbb{R}I_{\{s\le\theta_n\}}I_{\{z\in\varGamma\}}\mathfrak{z}_{_{s\wedge\theta_n}}
\mu^n(ds,dz)
\\&=\E\int_0^T\int_\mathbb{R}I_{\{s\le\theta_n\}}I_{\{z\in\varGamma\}}\mathfrak{z}_{_{(s\wedge\theta_n)-}}
[1+\varphi_s(X,z)]dsK(dz)
\\
&=\E\mathfrak{z}_{_{T\wedge\theta_n}}\int_0^T\int_\mathbb{R}I_{\{s\le\theta_n\}}I_{\{z\in\varGamma\}}
[1+\varphi(s,X_{s-},z)]dsK(dz)
\\
&=\widetilde{\E}^n\int_0^T\int_\mathbb{R}I_{\{s\le\theta_n\}}I_{\{z\in\varGamma\}}
[1+\varphi(s,X_{s-},z)]dsK(dz).
\end{align*}
Now, in view of an arbitrariness $\theta_n$ and $\varGamma$, statement (1) holds
(about additional details see \cite[\S 5, Ch.4 ]{LSMar} and \S 3.a - \S 3.d in \cite{JSh}).

\medskip
(2) To simplify notations denote
$$\begin{array}{c}
\mathcal{M}^{c,n}_t=\int_0^tI_{\{s\le \tau_n\}}b_s(X)dB_s,
\\ \\
\mathcal{M}^{d,n}_t=
\int_0^t\int_\mathbb{R}I_{\{s\le \tau_n\}}h_s(X,z)[\mu(ds,dz)-dsK(dz)].
  \end{array}
  $$
Taking into account that $\mathfrak{z}_{_{t\wedge\tau_n}}$ and $\mathcal{M}^{c,n}_t$,  $\mathcal{M}^{d,n}_t$ are square integrable martingales their quadratic characteristics are defined as:
\begin{align*}
&\langle \mathfrak{z}_{\cdot\wedge\tau_n}, \mathcal{M}^{c,n}\rangle_t
=\int_0^tI_{\{s\le\tau_n\}}\mathfrak{z}_{s-}\sigma_s(X)b_s(X))ds
\\
&[\mathfrak{z}_{\cdot\wedge\tau_n}, \mathcal{M}^{d,n}\rangle]_t
=\int_0^t\int_\mathbb{R}I_{\{s\le\tau_n\}}\mathfrak{z}_{s-}\varphi_s(X,z)h_s(X,z)\mu(ds,dz)
\end{align*}
Moreover
$\langle \mathfrak{z}_{\cdot\wedge\tau_n}\mathcal{M}^{d,n}\rangle_t$, being the compensator of
$[\mathfrak{z}_{\cdot\wedge\tau_n}, \mathcal{M}^{d,n}\rangle]_t$, obeys the following presentation
$$
\langle \mathfrak{z}_{\cdot\wedge\tau_n}\mathcal{M}^{d,n}\rangle_s=
\int_0^t\int_\mathbb{R}I_{\{s\le\tau_n\}}
\mathfrak{z}_{s-}\varphi_s(X,z)h_s(X,z)K(dz)ds.
$$
Next, by Theorem 2, \cite[\S 5, Ch.4 ]{LSMar},
\begin{equation*}
\begin{aligned}
\widetilde{\mathcal{M}}^{c,n}_t&=\mathcal{M}^{c,n}_t-
\int_0^t\mathfrak{z}^{-1}_{(s\wedge\tau_n)-}d\langle \mathfrak{z}_{\cdot\wedge\tau_n}
\mathcal{M}^{c,n}\rangle_s
\\
\widetilde{\mathcal{M}}^{d,n}_t&=\mathcal{M}^{d,n}_t
-\int_0^t\mathfrak{z}^{-1}_{(s\wedge\tau_n)-}d\langle \mathfrak{z}_{\cdot\wedge\tau_n},
\mathcal{M}^{d,n}\rangle_s
\end{aligned}
\end{equation*}
are continuous and purely discontinuous  $\Q^n$ - martingales.
In other words,
\newline $(\Q^n:\mathcal{M}^{c,n}_t; \mathcal{M}^{d,n}_t)$ processes obeys the presentations:
\begin{equation*}
\begin{aligned}
\mathcal{M}^{c,n}_t&=\int_0^tI_{\{s\le\tau_n\}}\sigma_s(X)b_s(X)ds
+\widetilde{\mathcal{M}}^{c,n}_t
\\
\mathcal{M}^{d,n}_t&=\int_0^t\int_\mathbb{R}I_{\{s\le\tau_n\}}\varphi_s(X,z)h_s(X,z)ds,
+\widetilde{\mathcal{M}}^{d,n}_t,
\end{aligned}
\end{equation*}
where $(\Q^n,\widetilde{\mathcal{M}}^{c,n}_t)$ is continuous martingale íåïðåðûâíûé and
$(\Q^n,\widetilde{\mathcal{M}}^{d,n}_t)$ is purely discontinuous martingale with
$\langle \widetilde{\mathcal{M}}^{c,n}\rangle_t$ and
$
\langle \widetilde{\mathcal{M}}^{d,n}\rangle_t
$
given  \eqref{eq:bubu}.
\end{proof}

\end{document}